\begin{document}

\theoremstyle{plain}
\newtheorem{C}{Convention}
\newtheorem{SA}{Standing Assumption}[section]
\newtheorem{SAS}{Standing Assumption}[subsection]
\newtheorem{theorem}{Theorem}[section]
\newtheorem{condition}{Condition}[section]
\newtheorem{lemma}[theorem]{Lemma}
\newtheorem{fact}[theorem]{Fact}
\newtheorem{proposition}[theorem]{Proposition}
\newtheorem{corollary}[theorem]{Corollary}
\newtheorem{claim}[theorem]{Claim}
\newtheorem{definition}[theorem]{Definition}
\newtheorem{Ass}[theorem]{Assumption}
\newcommand{\q}{Q}
\theoremstyle{definition}
\newtheorem{remark}[theorem]{Remark}
\newtheorem{note}[theorem]{Note}
\newtheorem{example}[theorem]{Example}
\newtheorem{assumption}[theorem]{Assumption}
\newtheorem*{notation}{Notation}
\newtheorem*{assuL}{Assumption ($\mathbb{L}$)}
\newtheorem*{assuAC}{Assumption ($\mathbb{AC}$)}
\newtheorem*{assuEM}{Assumption ($\mathbb{EM}$)}
\newtheorem*{assuES}{Assumption ($\mathbb{ES}$)}
\newtheorem*{assuM}{Assumption ($\mathbb{M}$)}
\newtheorem*{assuMM}{Assumption ($\mathbb{M}'$)}
\newtheorem*{assuL1}{Assumption ($\mathbb{L}1$)}
\newtheorem*{assuL2}{Assumption ($\mathbb{L}2$)}
\newtheorem*{assuL3}{Assumption ($\mathbb{L}3$)}
\newtheorem{charact}[theorem]{Characterization}
\newcommand{\notiz}{\textup} 
\renewenvironment{proof}{{\parindent 0pt \it{ Proof:}}}{\mbox{}\hfill\mbox{$\Box\hspace{-0.5mm}$}\vskip 16pt}
\newenvironment{proofthm}[1]{{\parindent 0pt \it Proof of Theorem #1:}}{\mbox{}\hfill\mbox{$\Box\hspace{-0.5mm}$}\vskip 16pt}
\newenvironment{prooflemma}[1]{{\parindent 0pt \it Proof of Lemma #1:}}{\mbox{}\hfill\mbox{$\Box\hspace{-0.5mm}$}\vskip 16pt}
\newenvironment{proofcor}[1]{{\parindent 0pt \it Proof of Corollary #1:}}{\mbox{}\hfill\mbox{$\Box\hspace{-0.5mm}$}\vskip 16pt}
\newenvironment{proofprop}[1]{{\parindent 0pt \it Proof of Proposition #1:}}{\mbox{}\hfill\mbox{$\Box\hspace{-0.5mm}$}\vskip 16pt}
\newcommand{\s}{\u s}

\newcommand{\Law}{\ensuremath{\mathop{\mathrm{Law}}}}
\newcommand{\loc}{{\mathrm{loc}}}
\newcommand{\Log}{\ensuremath{\mathop{\mathscr{L}\mathrm{og}}}}
\newcommand{\Meixner}{\ensuremath{\mathop{\mathrm{Meixner}}}}
\newcommand{\of}{[\hspace{-0.06cm}[}
\newcommand{\gs}{]\hspace{-0.06cm}]}

\let\MID\mid
\renewcommand{\mid}{|}

\let\SETMINUS\setminus
\renewcommand{\setminus}{\backslash}

\def\stackrelboth#1#2#3{\mathrel{\mathop{#2}\limits^{#1}_{#3}}}

\renewcommand{\theequation}{\thesection.\arabic{equation}}
\numberwithin{equation}{section}

\newcommand\llambda{{\mathchoice
      {\lambda\mkern-4.5mu{\raisebox{.4ex}{\scriptsize$\backslash$}}}
      {\lambda\mkern-4.83mu{\raisebox{.4ex}{\scriptsize$\backslash$}}}
      {\lambda\mkern-4.5mu{\raisebox{.2ex}{\footnotesize$\scriptscriptstyle\backslash$}}}
      {\lambda\mkern-5.0mu{\raisebox{.2ex}{\tiny$\scriptscriptstyle\backslash$}}}}}

\newcommand{\prozess}[1][L]{{\ensuremath{#1=(#1_t)_{0\le t\le T}}}\xspace}
\newcommand{\prazess}[1][L]{{\ensuremath{#1=(#1_t)_{0\le t\le T^*}}}\xspace}

\newcommand{\tr}{\operatorname{tr}}
\newcommand{\lijepoa}{{\mathscr{A}}}
\newcommand{\lijepob}{{\mathscr{B}}}
\newcommand{\lijepoc}{{\mathscr{C}}}
\newcommand{\lijepod}{{\mathscr{D}}}
\newcommand{\lijepoe}{{\mathscr{E}}}
\newcommand{\lijepof}{{\mathscr{F}}}
\newcommand{\lijepog}{{\mathscr{G}}}
\newcommand{\lijepok}{{\mathscr{K}}}
\newcommand{\lijepoo}{{\mathscr{O}}}
\newcommand{\lijepop}{{\mathscr{P}}}
\newcommand{\lijepoh}{{\mathscr{H}}}
\newcommand{\lijepom}{{\mathscr{M}}}
\newcommand{\lijepou}{{\mathscr{U}}}
\newcommand{\lijepov}{{\mathscr{V}}}
\newcommand{\lijepoy}{{\mathscr{Y}}}
\newcommand{\cF}{{\mathscr{F}}}
\newcommand{\cG}{{\mathscr{G}}}
\newcommand{\cH}{{\mathscr{H}}}
\newcommand{\cM}{{\mathscr{M}}}
\newcommand{\cD}{{\mathscr{D}}}
\newcommand{\bD}{{\mathbb{D}}}
\newcommand{\bF}{{\mathbb{F}}}
\newcommand{\bG}{{\mathbb{G}}}
\newcommand{\bH}{{\mathbb{H}}}
\newcommand{\dd}{\operatorname{d}\hspace{-0.05cm}}
\newcommand{\ddd}{\operatorname{d}}
\newcommand{\er}{{\mathbb{R}}}
\newcommand{\ce}{{\mathbb{C}}}
\newcommand{\erd}{{\mathbb{R}^{d}}}
\newcommand{\en}{{\mathbb{N}}}
\newcommand{\de}{{\mathrm{d}}}
\newcommand{\im}{{\mathrm{i}}}
\newcommand{\indik}{{\mathbf{1}}}
\newcommand{\D}{{\mathbb{D}}}
\newcommand{\E}{E}
\newcommand{\N}{{\mathbb{N}}}
\newcommand{\Q}{{\mathbb{Q}}}
\renewcommand{\P}{{\mathbb{P}}}
\newcommand{\ud}{\operatorname{d}\!}
\newcommand{\ii}{\operatorname{i}\kern -0.8pt}
\newcommand{\cadlag}{c\`adl\`ag }
\newcommand{\p}{P}
\newcommand{\F}{\mathbf{F}}
\newcommand{\1}{\mathbf{1}}
\newcommand{\f}{\mathscr{F}^{\hspace{0.03cm}0}}
\newcommand{\lle}{\langle\hspace{-0.085cm}\langle}
\newcommand{\rre}{\rangle\hspace{-0.085cm}\rangle}
\newcommand{\llbr}{[\hspace{-0.085cm}[}
\newcommand{\rrbr}{]\hspace{-0.085cm}]}
\newcommand{\g}{|\hspace{-0.06cm}|}

\def\EM{\ensuremath{(\mathbb{EM})}\xspace}

\newcommand{\la}{\langle}
\newcommand{\ra}{\rangle}

\newcommand{\Norml}[1]{%
{|}\kern-.25ex{|}\kern-.25ex{|}#1{|}\kern-.25ex{|}\kern-.25ex{|}}

\title[Absolute Continuity of Semimartingales]{Absolute Continuity of Semimartingales} 
\author[D. Criens]{David Criens}
\address{D. Criens - Technical University of Munich, Center for Mathematics, Germany}
\email{david.criens@tum.de}
\author[K. Glau]{Kathrin Glau}
\address{K. Glau - Technical University of Munich, Center for Mathematics, Germany}
\email{kathrin.glau@tum.de}
\keywords{absolute continuity of laws, semimartingale, Girsanov's theorem, change of measure, martingale problem, explosion\vspace{1ex}}

\subjclass[2010]{60G44, 60G48}


\date{\today}
\maketitle

\frenchspacing
\pagestyle{myheadings}

\begin{abstract}
We derive equivalent conditions for the (local) absolute continuity of two laws of semimartingales on random sets.	
Our result generalizes previous results for classical semimartingales by replacing a strong uniqueness assumption by a weaker uniqueness assumption.
The main tool is a generalized Girsanov's theorem, which relates laws of two possibly explosive semimartingales to a candidate density process. 
Its proof is based on an extension theorem for consistent families of probability measures.
Moreover, we show that in a one-dimensional It\^o-diffusion setting our result reproduces the known deterministic characterizations for (local) absolute continuity. 
Finally, we give a Khasminskii-type test for the absolute continuity of multi-dimensional It\^o-diffusions and derive linear growth conditions for the martingale property of stochastic exponentials.
\end{abstract}

\section{Introduction}
In the 1970s, probabilists studied conditions under which laws of semimartingales are (locally) absolutely continuous. The most general results were obtained by Jacod and M\'emin \cite{JM76} and Kabanov, Lipster and Shiryaev \cite{KLS-LACOM1,KLS-LACOM2} under a strong uniqueness assumption, called local uniqueness in the monograph of Jacod and Shiryaev \cite{JS}.

In this article we provide equivalent statements for the (local) absolute continuity of semimartingales on random sets under the assumption that the dominated law is unique.
While in Markovian settings local uniqueness is implied by uniqueness, it is surprising that this weaker condition suffices also beyond Markovian setups.

Our main tool is a generalized version of Girsanov's theorem for semimartingales, which relates two laws of semimartingales on random sets through a local martingale density. 
Key of the proof is to replace the classical Skorokhod space by a slightly larger path space whose topological properties allow the extension of relevant consistent families of probability measures. 

Let us highlight related result from the literature.
Under the so-called Engelbert-Schmidt conditions, a deterministic characterization of the (local) absolute continuity of one-dimensional It\^o-diffusions was given by Cherny and Urusov \cite{Cherny2006}.
In a similar setting, Mijativi\'c and Urusov \cite{MU(2012)} proved equivalent conditions for the martingale property of stochastic exponentials. In both cases, the proofs are based on an extension of stopping times and different from ours.
We relate our main result to these observations and explain that the deterministic characterizations also follow from our main result, see Section \ref{sec: Diff} below. In other words, we provide alternative proofs for the results. 
In an It\^o-jump-diffusion setting, Cheridito, Filipovi\'c and Yor \cite{CFY} proved local absolute continuity if the dominated measure is unique and non-explosive. In Section \ref{sec: CFY} below, we explain the relation of their result to ours.
In a multidimensional It\^o-diffusion setting, Ruf \cite{RufSDE} proved equivalent conditions for the martingale property of stochastic exponentials using an extension argument similar to ours. 
The result can be deduced from ours, see Section \ref{sec: Diff M} below.

We also present two novel applications of our main result. First, we give deterministic conditions for (local) absolute continuity of multidimensional It\^o-diffusions, extending the work of Ruf \cite{RufSDE}.
The idea is similar to Khasminskii's test for explosion, i.e. using comparison arguments we reduce the question when two multidimensional It\^o-diffusions are (locally) absolutely continuous to the question when an integral functional of a one-dimensional It\^o-diffusion converges, see Section \ref{sec: Diff M} below.
As a second application of our main result, 
we generalize Bene\u s's \cite{doi:10.1137/0309034} linear growth condition for the martingale property of stochastic exponentials to continuous It\^o-process drivers.
We emphasis that this application differs from the others, because no uniqueness argument is necessary. 

Let us also comment on further related literature. 
An extension argument similar to ours was used by Ruf and Perkovski \cite{perkowski2015} to study F\"ollmer measures, and by Kardaras, Kreher and Nikeghbali \cite{kardaras2015} to study the influence of strict local martingales on pricing financial derivatives.

The article is structured as follows. In Section \ref{sec: main} we introduce our setting and present our main results. Criteria for absolute continuity of semimartingales are studied in Section \ref{sec:LE} and in Section \ref{sec: comment} we relate our results to those in \cite{CFY, MU(2012)}. Finally, in Section \ref{sec: Diff M} we discuss conditions for the absolute continuity of multidimensional diffusions and in Section \ref{sec: BC} we derive criteria for the martingale property of stochastic exponentials.

Let us end the introduction with a remark on notation:
All non-explained notation can be found in the monograph of Jacod and Shiryaev \cite{JS}. Furthermore, all standing assumptions are imposed only for the section they are stated in. 

\section{A Generalized Girsanov Theorem}\label{sec: main}
\label{Hilbert-Vlaued Semimartingales and Semimartingale Problems on Stochastic Intervals}
We start by introducing our probabilistic setup.
We adjoint an isolated point \(\Delta\) to \(\mathbb{R}^d\) and write \(\mathbb{R}^d_\Delta \triangleq \mathbb{R}^d \cup\{\Delta\}\). For a function \(\alpha \colon \mathbb{R}_+ \to \mathbb{R}_\Delta^d\) we define \(\tau_{\Delta} (\alpha) \triangleq \inf(t \geq 0 \colon \alpha (t) = \Delta)\).
Let \(\Omega\) to be the set of all functions \(\alpha\colon [0, \infty) \to \mathbb{R}^d_\Delta\) such that \(\alpha\) is \cadlag on \([0, \tau_\Delta(\alpha))\) and \(\alpha (t) = \Delta\) for all \(t \geq \tau_{\Delta}(\alpha)\). 
Let \(X_t(\alpha) = \alpha(t)\) be the coordinate process and define \(\mathscr{F} \triangleq \sigma(X_t, t \geq 0)\). Moreover, for each \(t \geq 0\) we define \(\mathscr{F}^o_t\triangleq \sigma(X_s, s \in [0, t])\) and \(\mathscr{F}_t \triangleq \bigcap_{s > t} \mathscr{F}^o_s\). We work with the right-continuous filtration \(\F \triangleq (\mathscr{F}_t)_{t \geq 0}\). 

In general, if we use terms such as local martingale, semimartingale, stopping time, predictable, etc. we refer to \(\F\) as the underlying filtration.

Note that for all \(t \geq 0\)
\[
\{\tau_{\Delta} \leq t\} = \{X_t = \Delta\} \in \mathscr{F}^o_t \subseteq \mathscr{F}_t,
\]
which implies that \(\tau_\Delta\) is a stopping time.

For a stopping time \(\xi\) we set 
\begin{align*}
\mathscr{F}_\xi \triangleq \{A \in \mathscr{F} \colon A \cap \{\xi \leq t\} \in \mathscr{F}_t\},
\end{align*}
and 
\[\mathscr{F}_{\xi -} \triangleq \sigma \left(\mathscr{F}^o_0, \{A \cap \{\xi > t\} \colon t \geq 0, A \in \mathscr{F}_t\}\right).\]
We note that in the second definition the treatment of the initial \(\sigma\)-field is different from the classical definition, where \(\mathscr{F}_0\) is used instead of \(\mathscr{F}^o_0\). In our case, \(\mathscr{F}_{\xi-}\) is countably generated, see \cite[Lemma E.1]{perkowski2015}, which is important for the extension argument in the proof of our fist main result, Theorem \ref{theo:main1} below.

The following facts for \(\mathscr{F}_{\xi-}\) can be verified as in the classical case:
 \begin{enumerate}
 	\item[\textup{(a)}]\(\mathscr{F}_{\xi-} \subseteq \mathscr{F}_\xi\).
 	\item[\textup{(b)}] For two stopping times \(\xi\) and \(\rho\) and any \(G \in \mathscr{F}_{\xi}\)
we have 
\(G \cap \{\rho > \xi\} \in \mathscr{F}_{\rho-}\) and for all \(G \in \mathscr{F}\) we have \(G \cap \{\rho=\infty\} \in \mathscr{F}_{\rho-}\).
\item[\textup{(c)}] For an increasing sequence \((\rho_n)_{n \in \mathbb{N}}\) of stopping times with \(\rho \triangleq \lim_{n \to \infty} \rho_n\) it holds that
\begin{align*}
\bigvee_{n \in \mathbb{N}} \mathscr{F}_{\rho_n -} = \mathscr{F}_{\rho-}.
\end{align*}
\end{enumerate}

For two stopping times \(\xi\) and \(\rho\) we define the stochastic interval
\[
\of \xi, \rho\gs \triangleq \{(\omega, t) \in \Omega \times [0, \infty) \colon \xi(\omega) \leq t \leq \rho(\omega)\}.
\]
All other stochastic intervals are defined in the same manner.


In the spirit of stochastic differential equations up to explosion, we now formulate a semimartingale problem up to explosion.
We start by introducing the parameters:
\begin{enumerate}
\item[\textup{(i)}]
Let \((B, C, \nu)\) be a so-called \textit{candidate triplet} 
consisting of
\begin{enumerate}
\item[--] a predictable \(\mathbb{R}^d_\Delta\)-valued process \(B\), 
\item[--] a predictable \(\mathbb{R}^d_\Delta \otimes \mathbb{R}^d_\Delta\)-valued process \(C\), 
\item[--] a predictable random measure \(\nu\) on \([0, \infty) \times \mathbb{R}^d\).
\end{enumerate}
\item[\textup{(ii)}]
Let \(\eta\) be a probability measure on \((\mathbb{R}^d, \mathscr{B}(\mathbb{R}^d))\), which we call \textit{initial law}.
\item[\textup{(iii)}] Let \(\rho\) be a stopping time, which we call \emph{lifetime}.
\end{enumerate}
We fix a truncation function \(h\) and suppose that all terms such as \emph{semimartingale characteristics} refer to this truncation function.

The idea of the semimartingale problem formulated below is to find a probability measure on \((\Omega, \mathscr{F})\) such that the coordinate process \(X\) is a semimartingale with characteristics \((B, C, \nu)\) up to the lifetime \(\rho\) and with initial law \(\eta\).

\begin{definition}\label{Definition Semimartingale Problem}
We call a probability measure \(\p\) on \((\Omega, \mathscr{F})\) a \emph{solution to the semimartingale problem (SMP)} associated with \((\rho; \eta; B, C, \nu)\), if there exists an increasing sequence \((\rho_n)_{n \in \mathbb{N}}\) of stopping times and a sequence of \(P\)-semimartingales \((X^n)_{n \in \mathbb{N}}\) such that \(\rho_n \nearrow \rho\) as \(n \to \infty\) and for all \(n \in \mathbb{N}\) the following holds:
\begin{enumerate}
\item[\textup{(i)}] the stopped process \(X^{\rho_n} \triangleq (X_{t \wedge \rho_n})_{t \geq 0}\) is \(\p\)-indistinguishable from \(X^n\),
\item[\textup{(ii)}] the \(P\)-characteristics of \(X^n\)
are \(\p\)-indistinguishable from the stopped triplet \((B^{\rho_n}, C^{\rho_n}, \nu^{\rho_n})\), where 
\[
 \nu^{\rho_n}(\omega, \dd t \times \dd x) 
\triangleq \1_{\of 0, \rho_n\gs \times \mathbb{R}^d} (\omega, t, x) \nu(\omega, \dd t \times \dd x),
\]
\item[\textup{(iii)}] \(\p \circ X^{-1}_0 = \eta\).
\end{enumerate}
The sequence \((\rho_n)_{n \in \mathbb{N}}\) is called \emph{\(\rho\)-localization sequence} and the sequence \((X^n)_{n \in \mathbb{N}}\) is called \emph{fundamental sequence}.
If \(\p(\rho = \infty) = 1\), we say that \(\p\) is \emph{conservative}.
\end{definition}
In a conservative setting the semimartingale problem was first introduced by Jacod \cite{J79}.

In this section we impose the following standing assumption.
\begin{SA}\label{SA1}
The probability measure \(\p\) is a solution to the SMP \((\rho; \eta; B, C, \nu)\) with \(\rho\)-localization sequence \((\rho_n)_{n \in \mathbb{N}}\) and fundamental sequence \((X^n)_{n \in \mathbb{N}}\), and \(Z\) is a non-negative local \(P\)-martingale such that \(E^P[Z_0] = 1\) and \((\sigma_n)_{n \in \mathbb{N}}\) is an increasing sequence of stopping times such that \(Z^{\sigma_n}\) is a uniformly integrable \(P\)-martingale. Furthermore, \(P\)-a.s. \(\sigma_n < \sigma \triangleq \lim_{n \to \infty} \sigma_n\). W.l.o.g. \(\rho_n \vee \sigma_n \leq n\).
\end{SA}
Of course, since we assume that \(\sigma_n \leq n\), the stopped process \(Z^{\sigma_n}\) is a uniformly integrable \(P\)-martingale whenever it is a \(P\)-martingale. 

Let us further comment on this standing assumption. Our aim is to relate \(P\) and \(Z\) to another solution of an SMP. We start with a local relation and define a sequence \((Q_n)_{n \in \mathbb{N}}\) of probability measures via \begin{align}\label{eq: Qn expl} Q_n \triangleq Z_{\sigma_n} \cdot P,\end{align}
which means \(Q_n(G) = E^P\left[Z_{\sigma_n} \1_G\right]\) for all \(G \in \mathscr{F}\). 
Each \(Q_n\) solves an SMP by Girsanov's theorem. The next step is to extend this sequence and show that the extension also solves an SMP.
We observe that the sequence \((Q_n)_{n \in \mathbb{N}}\) is consistent and consequently classical extension arguments yield that we find a probability measure \(Q\) such that \(Q = Q_n\) on \(\mathscr{F}_{\sigma_n-}\) for all \(n \in \mathbb{N}\). Next, we want to conclude that \(Q\) solves an SMP.
 For this aim, however, the identity \(Q = Q_n\) on \(\mathscr{F}_{\sigma_n-}\) is not sufficient. At this point, the last part of our standing assumption comes into play.
We sketch the idea. The details are given in the proof of Theorem \ref{theo:main1} below.
 For any \(G \in \mathscr{F}_{\sigma_n}\) we have \(G \cap \{\sigma_n < \sigma_m\}\in \mathscr{F}_{\sigma_n}\cap\mathscr{F}_{\sigma_m-}\) and
\begin{align*}
Q (G \cap \{\sigma_n < \sigma_m\}) 
&= E^P \left[ Z_{\sigma_n} \1_{G \cap \{\sigma_n < \sigma_m\}}\right].
\end{align*}
Letting \(m \nearrow \infty\) and using our assumption that \(P\)-a.s. \(\sigma_n < \sigma\), we see that \(Q_n = Q\) on \(\mathscr{F}_{\sigma_n}\). This identity allows us to conclude that \(Q\) solves a SMP.

In the following two remarks we comment on choices for \((\sigma_n)_{n \in \mathbb{N}}\) and explain how to construct \(Z\) from a non-negative local \(P\)-martingale, which is only defined on a random set.
\begin{remark}\label{rem: sigma}
	An example for the sequence \((\sigma_n)_{n \in \mathbb{N}}\) in Standing Assumption \ref{SA1} is 
	\begin{align*}
	\sigma_n \triangleq \inf(t \geq 0 \colon Z_t > n) \wedge n. 
	\end{align*} 
	To see this, it suffices to note that \(Z_{t \wedge \sigma_n} \leq n + Z_{\sigma_n}\). Since \(Z_{\sigma_n}\) is \(P\)-integrable by Fatou's lemma, \(Z^{\sigma_n}\) is a uniformly integrable \(P\)-martingale by the dominated convergence theorem. Furthermore, in this case \(\{\sigma= \infty\}\) and \(\{\sigma_n < \sigma\}\) are \(P\)-full sets. More generally, \(\sigma_n\) can be chosen as \(\gamma_n \wedge n\), where \((\gamma_n)_{n \in \mathbb{N}}\) is a \(P\)-localizing sequence for \(Z\).
\end{remark}

\begin{remark}\label{rem: extension}
	Let \((\xi_n)_{n \in \mathbb{N}}\) be an increasing sequence of stopping times. We say that a process \(\widehat{Z}\) is a non-negative local \(P\)-martingale on the random set \(\bigcup_{n \in \mathbb{N}} \of 0, \xi_n\gs\), if
	the stopped process \(\widehat{Z}^{\xi_n}\) is a non-negative local \(P\)-martingale.
	It is always possible to extend the process to a globally defined non-negative local \(P\)-martingale by setting 
	\begin{align}\label{eq: extension Z}
	Z \triangleq \begin{cases}
	\widehat{Z},&\textup{ on } \bigcup_{n \in \mathbb{N}} \of 0, \xi_n\gs,\\
	\liminf_{n \to \infty} \widehat{Z}_{\xi_n},&\textup{ otherwise}.
	\end{cases}
	\end{align}
	By Fatou's lemma, the extension \(Z\) is a \(P\)-supermartingale and consequently the terminal value is \(P\)-a.s. finite.
	Using the Doob-Meyer decomposition theorem for supermartingales, it can be shown that \(Z\) is even a local \(P\)-martingale, see \cite[Lemma 12.43]{J79}.
\end{remark}

So far we have explained that we want to relate \(P\) and \(Z\) to a solution of an SMP. Our next step is to formally introduce the parameters of the SMP to which we want to connect \(P\) and \(Z\).

For \(n \in \mathbb{N}\) denote by \(X^{c, n}\) the continuous local \(P\)-martingale part of \(X^n\) and by \(Z^{c}\) the continuous local \(P\)-martingale part of \(Z\). Both are unique up to \(P\)-indistinguishability. 
The predictable quadratic covariation process (w.r.t. \(P\)) is denoted by \(\lle \cdot, \cdot\rre\).
Finally, let \((B', C, \nu')\) be a candidate triplet, such that up to a \(P\)-null set
\begin{equation}\label{candidtate triplet main theorem}
\begin{split}
B' &= B + \sum_{k = 0}^\infty \int_0^{\cdot}\frac{\1\{Z_{s-} > 0\}}{Z_{s-}} \1\{\rho_k \leq s < \rho_{k +1}\} \dd\hspace{0.06cm} \lle Z^c, X^{c, k} \rre_s + h(x) (Y - 1) \star \nu,\\ \nu' &= Y \cdot \nu
\end{split}
\end{equation} 
with \(\rho_0 \triangleq 0\),
\begin{align*}
Y \triangleq \frac{\1\{Z_- > 0\}}{Z_-}M^\p_{\mu^X} \left(Z \big|\widetilde{\mathscr{P}}\right),
\end{align*}
and \(M^P_{\mu^X}(\cdot|\widetilde{\mathscr{P}})\) denoting the conditional expectation w.r.t. the Dol\'eans measure
\begin{align*}
M^P_{\mu^X}(\dd \omega \times \dd t\times \dd x) \triangleq \mu^X(\omega, \dd t \times \dd x) P(\dd \omega)
\end{align*}
conditioned on \(\widetilde{\mathscr{P}} \triangleq \mathscr{P} \otimes \mathscr{B}(\mathbb{R}^d)\), see \cite[Section III.3.c)]{JS} for more details.
Here, we use the notation
\begin{align*}
h(x)(Y - 1) \star \nu_{t} \triangleq \begin{cases}
\int_0^{t}\int h(x) (Y(s, x) - 1) \nu(\dd s \times \dd x),&\textup{if it converges},\\
\Delta,&\textup{otherwise,}
\end{cases}
\end{align*}
and 
\begin{align*}
(Y \cdot \nu) (\dd t \times \dd x) \triangleq Y(t, x) \nu(\dd t \times \dd x).
\end{align*}
Furthermore, we use the convention that \(\Delta + x \equiv \Delta\) for all \(x \in \mathbb{R}^d_\Delta\).

Let us shortly comment on the intuition behind the modified candidate triplet \((B', C, \nu')\). The idea is to consider the probability measure \(Q_n\) as defined in \eqref{eq: Qn expl}. Then, by Girsanov's theorem, the stopped process \(Y^{n}_{\cdot \wedge \rho_n \wedge \sigma_n}\) is a  \(Q_n\)-semimartingale whose characteristics are \(Q_n\)-indistinguishable from the stopped modified triplet \((B'_{\cdot \wedge \rho_n \wedge \sigma_n}, C_{\cdot \wedge \rho_n \wedge \sigma_n}, \1_{\of 0, \rho_n \wedge \sigma_n\gs} \cdot \nu')\). Thus, if an extension of \(Q_n\) solves a SMP, the corresponding candidate triplet should be \((B', C, \nu')\).

For a second probability measure \(Q\) on \((\Omega, \mathscr{F})\), we write \(Q \ll_\textup{loc} P\) if \(Q \ll P\) on \(\mathscr{F}_{t}\) for all \(t \geq 0\). Moreover, we set 
\begin{align*}
\zeta \triangleq \sigma \wedge \rho.
\end{align*}
We are now in the position to state our first main result.
\begin{theorem}\label{theo:main1}
There exists a solution \(Q\) to the SMP \((\zeta; \eta'; B', C, \nu')\), where \[\eta' (G) \triangleq E^P\left[Z_0 \1_{\{X_0 \in G\}}\right]\] for \(G \in \mathscr{B}(\mathbb{R}^d)\), and 
\begin{align}\label{eq: eqality sigma}
Q = Z_{\sigma_n} \cdot P \text{ on } \mathscr{F}_{\sigma_n} \text{ for all } n \in \mathbb{N}.
\end{align}
Moreover, the following holds:
\begin{enumerate}
\item[\textup{(a)}] 
For all stopping times \(\xi\) we have
\begin{align}\label{eq:genGir}
\q = Z_\xi \cdot P \text{ on } \mathscr{F}_{\xi} \cap \{\sigma > \xi\}.\end{align}
\item[\textup{(b)}] The following are equivalent:
\begin{enumerate}
	\item[\textup{(b.i)}]
	\(Q\)-a.s. \(\sigma = \infty\).
	\item[\textup{(b.ii)}]
	The process \(Z\) is a \(P\)-martingale and \(P\)-a.s. \(Z = 0\) on \(\of \sigma, \infty\of\).
\end{enumerate}
If these statements hold true, then \(Q \ll_{\textup{loc}} P\) with \(\frac{\dd Q}{\dd P}|_{\mathscr{F}_{t}} = Z_t\) for all \(t \geq 0\).
\item[\textup{(c)}]
The following are equivalent:
\begin{enumerate}
	\item[\textup{(c.i)}]
	There exists an increasing sequence  \((\gamma_n)_{n \in \mathbb{N}}\) of stopping times such that \(\gamma_n \nearrow \sigma\) as \(n \to \infty\), \(Z^{\gamma_n}\) is a uniformly integrable \(P\)-martingale and 
	\begin{align*}
	\lim_{n \to \infty} Q\left(\gamma_n = \sigma = \infty \right)= 1.
	\end{align*}
	\item[\textup{(c.ii)}]
	The process \(Z\) is a uniformly integrable \(P\)-martingale with \(P\)-a.s. \(Z = 0\) on \(\of \sigma, \infty\of\).
\end{enumerate}
If these statements hold true, then \(Q \ll P\) with \(\frac{\dd Q}{\dd P} = \lim_{t \to \infty} Z_t \triangleq Z_\infty\).
\item[\textup{(d)}]
Suppose that at least one of the following conditions holds:
\begin{enumerate}
		\item[\textup{(d.i)}] \(Q\)-a.s. \(\rho_n < \sigma\) for all \(n \in \mathbb{N}\).
	\item[\textup{(d.ii)}]  \(P\)-a.s. \(\rho_n < \sigma\) and \(E^P\big[Z_{\rho_n}\big] = 1\) for all \(n \in \mathbb{N}\).
	\end{enumerate}
	Then, \(Q\) solves the SMP \((\rho; \eta'; B', C, \nu')\).
\end{enumerate}
\end{theorem}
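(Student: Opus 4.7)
The plan is to construct \(Q\) as the extension of the consistent family \(Q_n \triangleq Z_{\sigma_n}\cdot P\), verify that it solves the SMP with lifetime \(\zeta\) and initial law \(\eta'\), and then deduce (a)--(d) from the identity \(Q = Q_n\) on \(\mathscr{F}_{\sigma_n}\). The step I expect to be the main obstacle is the construction of \(Q\) itself: on the classical Skorokhod space, consistency on the predictable \(\sigma\)-algebras at a predictable time \(\sigma\) need not yield a compatible extension, so the enlargement of the path space by \(\Delta\) and the resulting countable generation of the \(\mathscr{F}_{\sigma_n-}\) are essential.

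First I would check consistency. Uniform integrability of \(Z^{\sigma_n}\) combined with \(\sigma_n\leq\sigma_m\) for \(n\leq m\) and optional stopping gives \(E^P[Z_{\sigma_m}\mid\mathscr{F}_{\sigma_n}] = Z_{\sigma_n}\), so \((Q_n)\) is consistent on \((\mathscr{F}_{\sigma_n-})\). Invoking \cite[Lemma E.1]{perkowski2015} for countable generation and an Ionescu-Tulcea-type extension theorem (in the spirit of \cite{perkowski2015,kardaras2015}) produces a probability measure \(Q\) on \((\Omega,\mathscr{F})\) with \(Q = Q_n\) on \(\mathscr{F}_{\sigma_n-}\). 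To upgrade this to \(\mathscr{F}_{\sigma_n}\), I would follow the sketch in the standing assumption: for \(G\in\mathscr{F}_{\sigma_n}\), property (b) yields \(G\cap\{\sigma_n<\sigma_m\}\in\mathscr{F}_{\sigma_m-}\), so
\[
Q(G\cap\{\sigma_n<\sigma_m\}) = Q_m(G\cap\{\sigma_n<\sigma_m\}) = E^P\bigl[Z_{\sigma_n}\1_{G\cap\{\sigma_n<\sigma_m\}}\bigr]
\]
with the last equality by optional stopping on \(Z^{\sigma_m}\); letting \(m\to\infty\) and using \(P\)-a.s.\ \(\sigma_n<\sigma\) delivers \(Q = Q_n\) on \(\mathscr{F}_{\sigma_n}\). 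The initial law identity follows by taking \(G = \{X_0\in\cdot\}\in\mathscr{F}_0\subseteq\mathscr{F}_{\sigma_n-}\) and again optionally stopping. To verify the SMP I apply the classical Girsanov theorem to each \(P\)-semimartingale \(X^n\) under \(Q_n\): the transformed characteristics, stopped at \(\rho_n\wedge\sigma_n\), coincide with \((B'^{\rho_n\wedge\sigma_n}, C^{\rho_n\wedge\sigma_n}, \1_{\of 0,\rho_n\wedge\sigma_n\gs}\cdot\nu')\), and \(\rho_n\wedge\sigma_n\nearrow\zeta\) is the required \(\zeta\)-localization sequence under \(Q\).

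For part (a), I would write \(G\cap\{\sigma>\xi\} = \bigcup_n\bigl(G\cap\{\sigma_n\geq\xi\}\bigr)\) with \(G\cap\{\sigma_n\geq\xi\}\in\mathscr{F}_{\sigma_n}\), apply \(Q = Q_n\) together with optional stopping of \(Z^{\sigma_n}\) at \(\xi\), and pass to the limit by monotone convergence. Part (b): (b.ii)\(\Rightarrow\)(b.i) follows from (a) at \(\xi = t\), since if \(Z\) is a \(P\)-martingale vanishing on \(\of\sigma,\infty\of\) then \(Q(\sigma>t) = E^P[Z_t\1_{\sigma>t}] = E^P[Z_t] = 1\) for all \(t\geq 0\), and the same display yields \(Q\ll_{\textup{loc}}P\) with density \(Z_t\); conversely, (b.i)\(\Rightarrow\)(b.ii) follows from \(1 = Q(\sigma>t) = E^P[Z_t\1_{\sigma>t}] \leq E^P[Z_t] \leq 1\), using the supermartingale property of the non-negative local martingale \(Z\), which forces \(E^P[Z_t] = 1\) and \(Z_t = 0\) on \(\{\sigma\leq t\}\) \(P\)-a.s. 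Part (c) is the global analogue with \(\gamma_n\) in place of \(\sigma_n\): combining \(Q(\gamma_n = \sigma = \infty)\to 1\) with \(Q = Z_{\gamma_n}\cdot P\) on \(\mathscr{F}_{\gamma_n}\) and dominated convergence identifies \(Z_\infty\) as the Radon-Nikodym derivative, while conversely the uniform integrability of \(Z\) propagates to a suitable localizing sequence via Remark \ref{rem: sigma}.

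For part (d), I would show that the \(\zeta\)-localization sequence \(\rho_n\wedge\sigma_n\) can be replaced by \(\rho_n\). Under (d.i), \(Q\)-a.s.\ \(\rho_n<\sigma\) forces \(\rho_n\wedge\sigma_m\to\rho_n\) \(Q\)-a.s.\ as \(m\to\infty\), so \(X^{\rho_n}\) is \(Q\)-indistinguishable from the fundamental process \(X^n\) and inherits the characteristics \((B'^{\rho_n}, C^{\rho_n}, \1_{\of 0,\rho_n\gs}\cdot\nu')\); then \(\rho_n\nearrow\rho\) closes the argument. Under (d.ii), \(E^P[Z_{\rho_n}] = 1\) together with \(P\)-a.s.\ \(\rho_n<\sigma\) shows that \(Z^{\rho_n}\) is a uniformly integrable \(P\)-martingale, so rerunning the consistency and extension arguments of the first two paragraphs with \(\rho_n\) in place of \(\sigma_n\) yields \(Q = Z_{\rho_n}\cdot P\) on \(\mathscr{F}_{\rho_n}\), after which the SMP assertion with lifetime \(\rho\) follows as in the construction of \(Q\).
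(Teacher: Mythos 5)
Your proposal follows the paper's route: construct \(Q\) by applying Parthasarathy's extension theorem to the consistent family \((Q_n)\) on the standard system \((\Omega,\mathscr{F}_{\sigma_n-})\), upgrade the identity to \(\mathscr{F}_{\sigma_n}\) using \(P\)-a.s.\ \(\sigma_n<\sigma\), invoke Girsanov to verify the \(\zeta\)-SMP, and then deduce (a)--(d). Parts (a)--(c) are essentially right modulo small inaccuracies: in (a) one should work with \(\{\sigma_n>\xi\}\) rather than \(\{\sigma_n\ge\xi\}\) (it is \(G\cap\{\sigma_n>\xi\}\) that lands in \(\mathscr{F}_{\sigma_n-}\) and the union of the strict events is identically \(\{\sigma>\xi\}\)), although your version is saved by \(Q(\sigma_n<\sigma)=1\); in (c), what one actually gets is \(Q=Z_{\gamma_n}\cdot P\) on \(\mathscr{F}_{\gamma_n-}\), not \(\mathscr{F}_{\gamma_n}\), via a monotone-class argument over \(\mathscr{F}_{\gamma_n\wedge\sigma_m}\) and the continuity of \(\mathscr{F}_{\cdot-}\), but this suffices because \(\{\gamma_n=\sigma=\infty\}\in\mathscr{F}_{\gamma_n-}\); and the converse (c.ii)\(\Rightarrow\)(c.i) is obtained simply by taking \(\gamma_n\equiv\sigma\) rather than via Remark~\ref{rem: sigma}.

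The genuine gap is in part (d). Under (d.i), the passage from "\(\rho_n\wedge\sigma_m\to\rho_n\) \(Q\)-a.s." to "\(X^{\rho_n}\) inherits the characteristics" hides the essential localization step: one must produce a bona fide \(Q\)-localizing sequence and appeal to the stability of the semimartingale class under localization (\cite[Proposition I.4.25]{JS}); the paper does this by introducing the auxiliary stopping times \(\gamma_m\) equal to \(\sigma_m\) on \(\{\rho_n\geq\sigma_m\}\) and \(\infty\) otherwise, so that \(\rho_n\wedge\gamma_m=\rho_n\wedge\sigma_m\) and \(\gamma_m\nearrow\infty\) \(Q\)-a.s. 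Under (d.ii), "rerunning the consistency and extension arguments with \(\rho_n\) in place of \(\sigma_n\)" is not a valid step: Parthasarathy's theorem yields existence of an extension, not uniqueness, so a fresh run may produce a measure different from the \(Q\) already constructed, and you would be left proving a statement about the wrong object. The correct (and the paper's) resolution is to deduce (d.ii)\(\Rightarrow\)(d.i) directly from part (a), namely \(Q(\rho_n<\sigma)=E^P\big[Z_{\rho_n}\1_{\{\rho_n<\sigma\}}\big]=E^P\big[Z_{\rho_n}\big]=1\), after which the (d.i) argument applies.
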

We stress that the terminal random variable \(Z_\infty\) is \(P\)-a.s. well-defined due to the supermartingale convergence theorem.

Part (a) of this theorem is a Girsanov-type formula, part (b) gives a criterion for the local absolute continuity of \(Q\) and \(P\) and part (c) gives a criterion for the global absolute continuity. 
In part (d) we give conditions such that \(Q\) solves an SMP with lifetime \(\rho\). In this case, our observations from (b) and (c) give criteria for the (local) absolute continuity of solutions of two SMPs with the same lifetime. In (d) we present a condition which only depends on \(Q\) and a condition which only depends on \(P\). The latter is important for applications because it allows us to check properties of \(P\) to conclude that \(Q\) solves an SMP with lifetime \(\rho\). The condition \(E^P \big[Z_{\rho_n}\big] = 1\) means that the stopped process \(Z^{\rho_n}\) is a uniformly integrable \(P\)-martingale.

\begin{remark}\label{rem: vanish}
	If \(P\)-a.s. \(Z = 0\) on \(\of \sigma, \infty\of\), then (b.i) and (b.ii) in Theorem \ref{theo:main1} are equivalent to \(Q \ll_{\textup{loc}} P\) with \(\frac{\dd Q}{\dd P}|_{\mathscr{F}_{t}} = Z_t\) for all \(t \geq 0\), and (c.i) and (c.ii) in Theorem \ref{theo:main1} are equivalent to \(Q \ll P\) with \(\frac{\dd Q}{\dd P} = Z_\infty\).
\end{remark}
We would like to choose \((\sigma_n)_{n \in \mathbb{N}}\) such that \(P\)-a.s. \(Z = 0\) on \(\of \sigma, \infty\of\). 
Of course, this is the case if \(P\)-a.s. \(\sigma = \infty\), which is true when \(\sigma_n\) is chosen as proposed in Remark \ref{rem: sigma}. In particular, it is interesting to note that when \(P\)-a.s. \(\sigma = \infty\), then \((\sigma_n)_{n \in \mathbb{N}}\) is a \(P\)-localization sequence for the local \(P\)-martingale \(Z\).

Let us mention another natural choice for \((\sigma_n)_{n \in \mathbb{N}}\). 
Suppose that
\[
Z \triangleq \exp \left(U - \tfrac{1}{2} \lle U, U\rre\right),
\]
where \(U\) is a continuous local \(P\)-martingale. Set
\[
\sigma_n \triangleq \inf(t \geq 0 \colon \lle U, U\rre_t \geq n) \wedge n,
\]
then \(P\)-a.s. \(Z = 0\) on \(\of \sigma, \infty\of\), which follows from the strong law of large numbers for continuous local martingales, see \cite[Exercise V.1.16]{RY}. 
In view of Theorem \ref{theo:main1}, this choice of \((\sigma_n)_{n \in \mathbb{N}}\) shows that \(Q \ll_\textup{loc} P\) is equivalent to \(Q\)-a.s. \(\lle U, U\rre_t < \infty\) for all \(t \geq 0\), and that \(Q \ll P\) is equivalent to \(Q\)-a.s. \(\lle U, U\rre_\infty < \infty\). This observation is in the spirit of classical results for the local absolute continuity of globally defined semimartingales. We comment on this in Section \ref{sec:LE} below, where we also define a version of \(\sigma_n\) in the presence of jumps.
\\\\
\noindent
\textit{Proof of Theorem \ref{theo:main1}:}
We construct \(\q\) using the extension theorem of Parthasarathy. 
We recall a definition due to F\"ollmer \cite{follmer72}.
Let \(\mathbb{T} \subseteq [0, \infty)\) be an index set and \((\Omega^*, \mathscr{F}^*_t)_{t \in \mathbb{T}}\) be a sequence of measurable spaces.
\begin{definition}\label{def: SS}
We call \((\Omega^*, \mathscr{F}^*_t)_{t \in \mathbb{T}}\) a standard system, if 
\begin{enumerate}
\item[\textup{(i)}]
\(\mathscr{F}^*_t \subseteq \mathscr{F}^*_s\) for \(t, s \in \mathbb{T}\) with \(t < s\), 
\item[\textup{(ii)}]
for each \(t \in \mathbb{T}\) the space \((\Omega^*, \mathscr{F}^*_t)\) is a standard Borel space, i.e. \(\mathscr{F}^*_t\) is \(\sigma\)-isomorphic to the Borel \(\sigma\)-field of a Polish space,
\item[\textup{(iii)}] for each increasing sequence \((t_n)_{n \in \mathbb{N}}\) of elements in \(\mathbb{T}\) and any decreasing sequence \((A_n)_{n \in \mathbb{N}}\), where \(A_n\) is an atom in \(\mathscr{F}^*_{t_n}\), we have \(\bigcap_{n \in \mathbb{N}} A_n \not = \emptyset\). 
\end{enumerate}
\end{definition}
It is shown in \cite[Appendix]{follmer72} that
\(\left(\Omega, \mathscr{F}_{\sigma_n -}\right)_{n \in \mathbb{N}}\) is a standard system. Here, the choice of the underlying measurable space is crucial, because \((\Omega, \mathscr{F}^o_t)_{t \geq 0}\) is a standard system, too. Furthermore, it is important to define \(\mathscr{F}_{\sigma_n-}\) with \(\mathscr{F}^o_0\) instead of \(\mathscr{F}_0\), because the proof of Definition \ref{def: SS} (ii) requires \(\mathscr{F}_{\sigma_n-}\) to be countably generated.
For \(n \in \mathbb{N}\) define the probability measure \[\q_{n} \triangleq Z_{\sigma_n} \cdot \p\] on \((\Omega, \mathscr{F})\).
We deduce from Parthasarathy's extension theorem, see \cite[Theorem V.4.2]{parthasarathy1967}, together with \cite[Theorem D.4, Lemma E.1]{perkowski2015}, where it is again important that \(\mathscr{F}_{\sigma-}\) is countably generated, that there exists a probability measure \(\q\) on \((\Omega,\mathscr{F})\) such that
\begin{align*}
\q =\q_n 
\text{ on } \mathscr{F}_{\sigma_n-} \textup{ for all } n \in \mathbb{N}.
\end{align*} 

Next, we show that this equality even holds on the larger \(\sigma\)-field \(\mathscr{F}_{\sigma_n}\).
For all \(G \in \mathscr{F}_{\sigma_n}\), we have \(G \cap \{\sigma_n < \sigma_m\} \in \mathscr{F}_{\sigma_n} \cap \mathscr{F}_{\sigma_m-}\), which yields 
\begin{align*}
Q(G \cap \{\sigma_n < \sigma\}) &= \lim_{m \to \infty} Q(G \cap \{\sigma_n < \sigma_m\}) \\&= \lim_{m \to \infty} E^P\left[ Z_{\sigma_m} \1_{G \cap \{\sigma_n< \sigma_m\}}\right] \\&= \lim_{m \to \infty} E^P\left[ Z_{\sigma_n} \1_{G \cap \{\sigma_n< \sigma_m\}}\right] \\&= E^P\left[Z_{\sigma_n} \1_{G \cap \{\sigma_n < \sigma\}}\right] \\&= E^P\left[Z_{\sigma_n} \1_G\right]\\&= Q_n(G),
\end{align*}
due to the monotone convergence theorem, the optional stopping theorem and our assumption that \(P\)-a.s. \(\sigma_n < \sigma\). In particular, \(Q (\sigma_n < \sigma) = Q_n(\Omega) = 1\).
Thus, we have shown 
\begin{align*}
Q(G) = Q(G \cap \{\sigma_n < \sigma\}) = Q_n(G),
\end{align*}
i.e. in other words
\begin{align*}
Q = Q_n \textup{ on }\mathscr{F}_{\sigma_n} \textup{ for all } n \in \mathbb{N}.
\end{align*}

Next, we show that \(\q\) solves the SMP \((\zeta; \eta; B', C, \nu')\). Set \(\zeta_n \triangleq \sigma_n \wedge \rho_n\).
Since \(\q_{n} \ll \p\) with density process 
\[Z_{\sigma_n \wedge t} = \frac{\dd \q_n}{\dd \p}\bigg|_{\mathscr{F}_{t}},\]
we deduce from Girsanov's theorem for semimartingales, see \cite[Theorem III.3.24]{JS}, that the stopped process \(X^n_{\cdot \wedge \zeta_n}\) is a \(\q_n\)-semimartingale whose characteristics are \(\q_n\)-indistinguishable from \((B'_{\cdot \wedge \zeta_n}, C_{\cdot \wedge \zeta_n}, \1_{\of 0, \zeta_n\gs} \cdot \nu')\). 
Here, we use that, up to a \(P\)-evanescence set (and since \(Q_n \ll P\) also up to a \(Q_n\)-evanescence set), \(X^{k, c} = X^{k+1, c}\) on \(\of 0, \rho_k\gs\) for all \(k \in \mathbb{N}\), which follows from the uniqueness of the continuous local martingale part. 

Let us now transfer this observation from \(Q_n\) to the extension \(Q\). 
We can consider \(X^n_{\cdot \wedge \zeta_n}\) as a semimartingale on the filtered probability space \((\Omega, \mathscr{F}_{\zeta_n}, (\mathscr{F}_{t \wedge \zeta_n})_{t \geq 0}, Q_n)\), see \cite[Section 10.1]{J79}. The identity \(Q = Q_n\) on \(\mathscr{F}_{\zeta_n}\subseteq \mathscr{F}_{\sigma_n}\) implies that \(X^n_{\cdot \wedge \zeta_n}\) is an  \(\q\)-semimartingale whose characteristics are \(\q\)-indistinguishable from \((B'_{\cdot \wedge \zeta_n}, C_{\cdot \wedge \zeta_n}, \1_{\of 0, \zeta_n\gs} \cdot \nu')\). 
We conclude that \(\q\) solves the SMP \((\zeta; \eta'; B', C, \nu')\).

We proceed with the proofs of (a) -- (d).
\begin{enumerate}
\item[\textup{(a)}]
Let \(\xi\) be a stopping time and \(A \in \mathscr{F}_{\xi}\), then 
\begin{align*}
\q(A \cap \{\sigma > \xi\}) &= \lim_{n \to \infty} \q(A \cap \{\sigma_n > \xi\}) 
\\&=
\lim_{n \to \infty} \E^\p\left[Z_{\sigma_n} \1_{A \cap\{\sigma_n > \xi\}}\right] 
\\&= \lim_{n \to \infty} \E^\p\left[Z_{\xi} \1_{A \cap \{\sigma_n > \xi\}}\right]
\\&= \E^\p\left[Z_{\xi} \1_{A \cap \{\sigma > \xi\}}\right],
\end{align*}
again due to the monotone convergence theorem and the optional stopping theorem. 

\item[\textup{(b)}]
Suppose that (b.i) holds. Then, due to (a), we obtain
\begin{align*}
1 = Q(\sigma > t) = E^P \left[Z_t \1_{\{\sigma > t\}}\right].
\end{align*}
Since \(Z\) is a \(P\)-supermartingale by Fatou's lemma, we have 
\begin{align*}
E^P[Z_t] \leq E^P[Z_0] = 1.
\end{align*}
We conclude that 
\begin{align*}
0 \leq E^P\left[Z_t \1_{\{t \geq \sigma\}}\right] = E^P[Z_t]  - 1 \leq 0,
\end{align*}
which implies that \(P\)-a.s. \(Z_t = 0\) on \(\{t \geq \sigma\}\). 
This yields that for all \(G \in \mathscr{F}_t\)
\begin{align*}
Q(G) = E^P \left[Z_t\1_G\right],
\end{align*}
and \(Q \ll_\textup{loc} P\) with \(Z_t = \frac{\dd Q}{\dd P}|_{\mathscr{F}_t}\) follows immediately. In particular, \(Z\) is a \(P\)-martingale. In other words, we have shown that (b.i) \(\Rightarrow\) (b.ii) and that (b.i) implies \(Q \ll_\textup{loc} P\) with \(Z_t = \frac{\dd Q}{\dd P}|_{\mathscr{F}_t}\).

It remains to prove the implication (b.ii) \(\Rightarrow\) (b.i). If (b.ii) holds, (a) implies that for all \(t \geq 0\)
\begin{align*}
Q(\sigma > t) = E^P\left[Z_t \1_{\{\sigma > t\}}\right] = E^P[Z_t] = E^P[Z_0] = 1.
\end{align*}
It follows that \(Q(\sigma = \infty) = 1\), i.e. that (b.i) holds.

\item[\textup{(c)}] To see the implication (c.ii) \(\Rightarrow\) (c.i), we set \(\gamma_n \triangleq \sigma\) for all \(n \in \mathbb{N}\).
Then, the implication (c.ii) \(\Rightarrow\) (b.ii) \(\Leftrightarrow\) (b.i) yields that this sequence has all properties as claimed in (c.i).

Let us assume that (c.i) holds. Since (c.i) \(\Rightarrow\) (b.i) \(\Leftrightarrow\) (b.ii), it suffices to prove that \(Z\) is a uniformly integrable \(P\)-martingale. In fact, since \(Z\) is a \(P\)-supermartingale, it suffices to show that \(E^P[Z_\infty] \geq 1\).
Let \(A \in \mathscr{F}_{\gamma_n} \cap \mathscr{F}_{\sigma_m} = \mathscr{F}_{\gamma_n \wedge \sigma_m}\). Then, 
\begin{align*}
Q(A) = E^P \left[Z_{\sigma_m} \1_A\right] = E^P\left[Z_{\gamma_n} \1_A\right],
\end{align*}
where we use \eqref{eq: eqality sigma} and the optional stopping theorem.
By a monotone class argument, we have 
\begin{align*}
Q = Z_{\gamma_n} \cdot P \textup{ on } \mathscr{F}_{\gamma_n -},
\end{align*}
where we use that \(\gamma_n \leq \sigma\).
Note that \(\{\gamma_n = \sigma = \infty\} \in \mathscr{F}_{\gamma_n -}\). 
Thus, we obtain  
\begin{align*}
1 = \lim_{n \to \infty} Q\left(\gamma_n = \sigma = \infty\right)
&= \lim_{n \to \infty} E^P \left[ Z_{\gamma_n} \1_{\{\gamma_n = \sigma = \infty\}}\right]
\\&= \lim_{n \to \infty} E^P \left[ Z_{\infty} \1_{\{\gamma_n = \sigma = \infty\}}\right]
\\&\leq E^P \left[Z_\infty\right].
\end{align*} 
This proves (c.i) \(\Rightarrow\) (c.ii).

Finally, if (c.ii) holds, then (b) implies that \(Q \ll_{\textup{loc}} P\) with \(\frac{\dd Q}{\dd P}|_{\mathscr{F}_t} = Z_t\). Hence, \(Q \ll P\) with \(\frac{\dd Q}{\dd P} = Z_\infty\) follows immediately from \cite[Proposition III.3.5]{JS} and the uniform integrability of \(Z\).
\item[\textup{(d)}] 
We first show that (d.ii) \(\Rightarrow\) (d.i).
If \(P\)-a.s. \(\rho_n < \sigma\) and \(E^P\big[Z_{\rho_n}\big] = 1\), then (a) yields 
\begin{align}\label{eq: Q implies by P}
Q(\rho_n < \sigma) &= E^P \big[Z_{\rho_n} \1_{\{\rho_n < \sigma\}}\big] = E^P \big[Z_{\rho_n}\big] = 1.
\end{align}
Thus, (d.ii) \(\Rightarrow\) (d.i). 

Suppose that (d.i) holds.
We define the sequence of stopping times (see, e.g., \cite[Theorem III.3.9]{HWY} for the fact that the following are stopping times)
\begin{align*}
\gamma_m \triangleq \begin{cases} \sigma_m,&\textup{on } \{\rho_n \geq \sigma_m\},\\\infty,&\textup{otherwise},
\end{cases}
\end{align*}
and note that \(Q\)-a.s. \(\gamma_m \nearrow \infty\) as \(m \to \infty\) and \(\rho_n \wedge \gamma_m = \rho_n \wedge \sigma_m\). 
As above, it follows from Girsanov's theorem that for all \(m \in \mathbb{N}\) the process \(X^n_{\cdot \wedge \rho_n \wedge \gamma_m}\) is a \(Q\)-semimartingale whose characteristics are \(Q\)-indistinguishable from the triplet \((B'_{\cdot \wedge \rho_n \wedge \gamma_m}, C_{\cdot \wedge \rho_n \wedge \gamma_m}, \1_{\of 0, \rho_n \wedge \gamma_m\gs} \cdot \nu')\). Recall the fact that the class of semimartingales is stable under localization, see \cite[Proposition I.4.25]{JS}. This fact implies that \(X^n_{\cdot \wedge \rho_n}\) is a \(Q\)-semimartingale whose characteristics are \(Q\)-indistinguishable from \((B'_{\cdot \wedge \rho_n}, C_{\cdot \wedge \rho_n}, \1_{\of 0, \rho_n\gs} \cdot \nu')\).
In other words, we have shown that \(Q\) solves the SMP \((\rho; \eta'; B', C, \nu')\). 
\end{enumerate}
The proof is complete.
\qed
\\\\
In the next section, we discuss consequences of Theorem \ref{theo:main1}.
\section{Absolute Continuity of Semimartingales}\label{sec:LE}
In this section we study absolute continuity of semimartingales. 
Systematic approaches in conservative settings were given by Kabanov, Lipster and Shiryaev \cite{KLS-LACOM1,KLS-LACOM2}, Jacod and M\'emin \cite{JM76} and Jacod \cite{J79, Jacod1979} under a strong uniqueness assumption, called local uniqueness in the monograph \cite{JS}.
As we show below, the local uniqueness assumption can be replaced by a usual uniqueness assumption. This is well-known to be true in Markovian settings and very surprising to hold in all generality.

%
%

Let \(\beta\colon \Omega \times [0, \infty) \to \mathbb{R}^d\) be predictable and \(U \colon \Omega \times [0, \infty) \times \mathbb{R}^d \to [0, \infty)\) be \(\widetilde{\mathscr{P}}\)-measurable. Furthermore, let \(B, C\) and \(\nu\) are given as in Section \ref{sec: main}.
Additionally, we suppose that \(C\) admits a decomposition
\[
C = \int_0^\cdot c_s \dd A_s,
\]
where \(c\) is a predictable \(\mathbb{S}^d\)-valued process and \(A\) is a non-negative, increasing, predictable and right-continuous process starting in the origin. Here, \(\mathbb{S}^d\) denotes the set of all symmetric non-negative definite real \(d \times d\) matrices. The integral is set to \(\Delta\) if it diverges.
We further set 
\begin{equation}\label{eq:candJ}
\begin{split}
B' &\triangleq B + \int_0^\cdot  c_s\beta_s \dd A_s + h(x) (U - 1) \star \nu,\\ \nu' &\triangleq U\cdot  \nu,
\end{split}
\end{equation}
where an integral is set to be \(\Delta\) whenever it diverges.
The first standing assumption in this section is the following.
\begin{SA}\label{SA 2}
	Let \(P\) be a solution to the SMP \((\rho ; \eta; B, C, \nu)\), with \(\rho\)-localizing sequence \((\rho_n)_{n \in \mathbb{N}}\) and fundamental sequence \((X^n)_{n \in \mathbb{N}}\), and let \(Q^*\) be a solution to the SMP \((\rho; \eta; B', C, \nu')\). W.l.o.g. \(\rho_n \leq n\). 
\end{SA}
For all \(t \geq 0\) we define
\[
\widehat{U}_t \triangleq \int U(t, x) \nu(\{t\} \times \dd x),\qquad a_t \triangleq \nu(\{t\} \times \mathbb{R}^d).
\]
\begin{SA} For all \(t \geq 0\) we have identically \(a_t \leq 1\) and \(\widehat{U}_t \leq 1\).
\end{SA} We set
\begin{equation}\label{eq:H}
\begin{split}
H_t \triangleq \int_0^{t \wedge \rho}  \langle \beta_s&, c_s \beta_s \rangle \dd A_s + \left(1 - \sqrt{U}\right)^2 \star \nu_{t \wedge \rho} 
+ \sum_{s \leq t \wedge \rho} \left(\sqrt{1 - a_s} - \sqrt{1 - \widehat{U}_s}\right)^2,
\end{split} 
\end{equation}
and define 
\begin{equation}\label{eq: sigma}
\begin{split}
\sigma_n &\triangleq \inf(t \geq 0 \colon H_t \geq n) \wedge n,\qquad \sigma \triangleq \lim_{n \to \infty} \sigma_n.
\end{split}
\end{equation}
The process \(H\) is increasing, but not in the sense of \cite{JS}, because it may fail to be right-continuous, i.e.
on \(\{\sigma < \infty\}\) it can happen that \(H_\sigma < \infty\) and \(H_{\sigma +} = \infty\). Here, we stress that increasing functions have left and right limits. 

In the following standing assumption we suppose that \(H\) is right-continuous and that \(H\) can only attend \(\infty\) in a continuous manner. 
We comment on this in Remark \ref{rem: jump inf} below.
\begin{SA}\label{SA 3}
Up to \(P\)-evanescence, on \(\bigcup_{n \in \mathbb{N}} \of 0, \sigma_n \wedge \rho_n\gs\) 
\begin{equation}\label{cond:Ya}
\begin{split}
a = 1\ &\Rightarrow\ \widehat{U} = 1.
\end{split}
	\end{equation}
Furthermore, one of the following holds:
\begin{enumerate}
	\item[\textup{(a)}] \(P\)-a.s. \(\rho_n < \sigma\) for all \(n \in \mathbb{N}\).
	\item[\textup{(b)}] \(P\)-a.s. \(H_\sigma = \infty\) on \(\{\sigma < \infty\}\), and for all solutions \(\widehat{Q}\) to the SMP \((\sigma \wedge \rho; \eta; B', C, \nu')\) we have \(\widehat{Q}\)-a.s. \(\rho_n < \sigma\) for all \(n \in \mathbb{N}\).
\end{enumerate}
\end{SA}

To get an intuition for the condition \eqref{cond:Ya}, suppose that \(P\) and \(Q^*\) are laws of \(\mathbb{R}^d\)-valued semimartingales with independent increments. In this case, the triplets \((B, C, \nu)\) and \((B', C, \nu')\) are deterministic and we have
\begin{align*}
P(\Delta X_t \in \dd x) &= \1_{\mathbb{R}^d \backslash \{0\}} \nu(\{t\} \times \dd x) + \left(1 - a_t\right) \delta_0 (\dd x),\\
Q^*(\Delta X_t \in \dd x) &= \1_{\mathbb{R}^d \backslash \{0\}} U(t, x)\nu(\{t\} \times \dd x) + \left(1 - \widehat{U}_t\right) \delta_0 (\dd x),
\end{align*}
where \(\delta\) denotes the Dirac measure, see \cite[Theorem II.4.15]{JS}.
If \(a_t = 1\), then \(Q^* (\Delta X_t \in \dd x) \ll P (\Delta X_t \in \dd x)\) can only be true when \(\widehat{U}_t = 1\). The absolute continuity \(Q^* (\Delta X_t \in \dd x) \ll P (\Delta X_t \in \dd x)\) is implied by \(Q^* \ll_{\textup{loc}} P\) and therefore \eqref{cond:Ya} is very natural.


If \(P\)-a.s. \(\rho_n < \sigma\) for all \(n \in \mathbb{N}\), then \(P\)-a.s.
\begin{align}\label{eq: sigma decomp}
\sigma = \begin{cases}
\rho,& \textup{if } H_{\rho} = \infty \textup{ on } \{\rho < \infty\},\\
\infty,&\textup{otherwise},
\end{cases}
\end{align}
which yields that \(P\)-a.s. \(H_{\sigma} = \infty\) on \(\{\sigma < \infty\}\), i.e. \(H\) is right-continuous.
We compute that
\begin{equation}\label{eq: H bounded}
\begin{split}
H_{\sigma_n} &\leq n + \Delta H_{\sigma_n} 
\\&\leq n + 2 \left(a_{\sigma_n} + \widehat{U}_{\sigma_n} + 1 - a_{\sigma_n} + 1 - \widehat{U}_{\sigma_n} \right)
\\&= n + 4.
\end{split}
\end{equation}
Thus, if \(P\)-a.s. \(H_{\sigma} = \infty\) on \(\{\sigma < \infty\}\), we have \(P\)-a.s. \(\sigma_n < \sigma\). We note that this inequality also holds on \(\{\sigma = \infty\}\) since \(\sigma_n\) is bounded by \(n\), see \eqref{eq: sigma}. Providing an intuition, this means that \(H\) cannot jump to \(\infty\).

Next, we define a non-negative local martingale \(Z\) which relates \(P\) and \(Q^*\). 
We find a non-negative local \(P\)-martingale on \(\bigcup_{n \in \mathbb{N}} \of 0, \sigma_n \wedge \rho_n\gs\) which coincides with the stochastic exponential of
\begin{align}\label{eq: N}
\int_0^{\cdot} \la \beta_s, \dd X^{n, c}_s\ra + \left(U - 1 + \frac{\widehat{U} - a}{1 - a}\right)  \star (\mu^{X^n} - \nu)
\end{align}
on the random set \(\of 0, \sigma_n \wedge \rho_n\gs\), see \cite[Proposition II.1.16]{JS}. Here, we use the convention that \(\tfrac{0}{0} \equiv 0\). The second stochastic integral denotes the discontinuous local \(P\)-martingale whose jump process is \(P\)-indistinguishable from 
\begin{align}\label{eq: delta N}
(U(\cdot, \Delta X^n) - 1) \1_{\{\Delta X^n \not = 0\}} - \left(\frac{\widehat{U} - a}{1 - a} \right)\1_{\{\Delta X^n = 0\}}, 
\end{align}
see \cite[Section II.1]{JS} for more details. The non-negativity follows from the fact that \eqref{cond:Ya} implies that \eqref{eq: delta N} is, up to \(P\)-evanescence, greater or equal than \(-1\) on \(\bigcup_{n \in \mathbb{N}} \of 0, \sigma_n \wedge \rho_n\gs\), see \cite[Theorem I.4.61]{JS}.
As pointed out in Remark \ref{rem: extension}, we can extend this non-negative local \(P\)-martingale on \(\bigcup_{n \in \mathbb{N}} \of 0, \sigma_n \wedge \rho_n\gs\) to a global one, which we denote \(Z\) in the following.

By \cite[Theorem 8.25]{J79} and similar arguments as used in the proof of \cite[Lemma 12.44]{J79}, \eqref{eq: H bounded} implies that the stopped process \(Z^{\sigma_n}\) is a uniformly integrable \(P\)-martingale and, since \(P\)-a.s. \(H_{\sigma} = \infty\) on \(\{\sigma < \infty\}\), \cite[Theorem 8.10]{J79} yields that \(P\)-a.s. \(Z = 0\) on \(\of \sigma, \infty\of\).

\begin{remark}\label{rem: jump inf}
If \(H\) is allowed to jump to \(\infty\), it may happen that \(Z\) is positive on \(\of \sigma, \infty\of\) with positive \(P\)-probability. In this case, the parts (b) and (c) in Theorem \ref{theo:main1} are empty. 
Of course, we could modify \(Z\) to be zero on \(\of \sigma, \infty\of\), but then the modification might only be a supermartingale.
Let us discuss an explicit example.
	Consider a one-dimensional \([- \infty, \infty]\)-valued diffusion 
	\begin{align*}
	\dd Y_t = \mu(Y_t) \dd t + a(Y_t)\dd W_t, 
	\end{align*}
	where \(W\) is a one-dimensional Brownian motion. If \(\mu\) and \(a\) satisfy the Engelbert-Schmidt conditions, see \cite{KaraShre} or Standing Assumption \ref{SA 4} below, then \(Y\) exists up to an explosion time \(\theta\).
	In this case, for any Borel function \(f\colon \mathbb{R} \to [0, \infty)\), the integral process
	\begin{align*}
	K \triangleq \int_0^{\cdot \wedge \theta} f(Y_s)\dd s
	\end{align*}
	is similar to \(H\). Let \(D\) be the set of all \(x \in \mathbb{R}\) for which there is no \(\epsilon > 0\) such that
	\begin{align*}
	\int_{x - \epsilon}^{x + \epsilon} \frac{f(y)}{a^2(y)} \dd y < \infty,
	\end{align*}
	and denote 
	\begin{align*}
	\eta_D \triangleq \theta \wedge \inf(t \geq 0 \colon Y_t \in D).
	\end{align*}
	By \cite[Theorem 2.6]{mijatovic2012}, we have a.s.
	\begin{align*}
	K_t\  \begin{cases} < \infty,& t \in [0, \eta_D),\\ = \infty,& t \in (\eta_D, \theta].\end{cases}
	\end{align*}
	This characterization follows naturally from the occupation times formula, which states that for \(t < \theta\) a.s.
	\[
	\int_0^t f(Y_s)\dd s = \int_0^t \frac{f(Y_s)}{a^2(Y_s)}\dd\hspace{0.05cm} \lle Y, Y\rre_s = \int_{- \infty}^\infty \frac{f(y)}{a^2(y)} L_t^y(Y) \dd y,
	\]
	where \(L\) denotes the local time, see \cite[Equation (4.4)]{MANA:MANA19911510111}.
	On the set \(\{\eta_D < \theta\}\) it might happen with positive probability that \(K_{\eta_D} < \infty\), see \cite[Sections 2.4, 2.5]{mijatovic2012} for more details. In this case, \(K\) jumps to infinity and the extension \(Z\) is positive on \(\of \eta_D, \infty\of\) with positive probability. Deterministic conditions for this case can be found in \cite{mijatovic2012}.
	Finally, we stress that a.a. paths of \(K\) do not jump to infinity if \(D = \emptyset\).
\end{remark}
\begin{SA}\label{SA3a}
	Standing Assumption \ref{SA 3} holds with \textup{(a)} replaced by 
	\begin{enumerate}
		\item[\textup{(a)'}] \(P\)-a.s. \(\rho_n < \sigma\) and \(E^P \big[Z_{\rho_n}\big] = 1\) for all \(n \in \mathbb{N}\).
	\end{enumerate}
\end{SA}
The additional assumption has a local character. In fact, in many cases it follows easily from classical moment conditions such as Novikov-type conditions. For the readers convenience we collect two conditions:
\begin{proposition}\label{prop: Nov}
	Let \(n \in \mathbb{N}\).
	Assume that at least one of the following conditions holds:
	\begin{enumerate}
		\item[\textup{(i)}] The random variable \(H_{\rho_n}\) is bounded up to a \(P\)-null set. 
		\item[\textup{(ii)}] Set 
		\begin{align*}
		H^* \triangleq \frac{1}{2} \int_0^{\cdot} \langle \beta_s, c_s \beta_s\rangle \dd A_s 
		&+ \sum_{s \leq \cdot} \left(\big(1 - \widehat{U}_s\big) \log \left(\frac{1 - \widehat{U}_s}{1 - a_s}\right) + \widehat{U}_s - a_s \right)\\&+ \left(U \log(U) - U + 1 \right)\star \nu_{\cdot},
		\end{align*}
		where we use the convention that \(0 \times (- \infty) \equiv 0\).
		It holds that \(E^P \big[\exp (H^*_{\rho_n})\big] < \infty\).
	\end{enumerate}
	Then, \(E^P\big[Z_{\rho_n}\big] = 1\).
\end{proposition}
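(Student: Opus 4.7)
The plan is to show in each case that $E^P[Z_{\rho_n}] = 1$. Since $Z$ is a non-negative local $P$-martingale with $E^P[Z_0]=1$, it is a $P$-supermartingale by Fatou's lemma, so this identity at the bounded stopping time $\rho_n$ is equivalent to the stopped process $Z^{\rho_n}$ being a uniformly integrable $P$-martingale.

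For part \textup{(i)}, I would exploit the fact (established in the discussion preceding Remark \ref{rem: jump inf} via \cite[Theorem 8.25]{J79}) that $Z^{\sigma_m}$ is a uniformly integrable $P$-martingale for each $m \in \mathbb{N}$. If $H_{\rho_n} \leq K$ $P$-a.s.\ for some constant $K \geq 0$, then since $H$ is increasing and $\rho_n \leq n$, the definition $\sigma_m = \inf(t \geq 0 \colon H_t \geq m) \wedge m$ forces $\rho_n \leq \sigma_m$ $P$-a.s.\ as soon as $m \geq K \vee n$. Optional stopping of $Z^{\sigma_m}$ at the bounded stopping time $\rho_n$ then yields $E^P[Z_{\rho_n}] = E^P[Z_{\sigma_m \wedge \rho_n}] = E^P[Z_0] = 1$.

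For part \textup{(ii)}, the plan is to invoke the Novikov--L\'epingle--M\'emin criterion. Under Standing Assumption \ref{SA3a}\,(a)' we have $\rho_n < \sigma$ $P$-a.s., so $Z^{\rho_n}$ coincides $P$-a.s.\ with the stochastic exponential of the stopped local $P$-martingale $N^{\rho_n}$, where $N$ is the local martingale in \eqref{eq: N}. The key step is to identify $H^*_{\rho_n}$ with the predictable compensator of
\[
\tfrac{1}{2}\lle N^c, N^c\rre + \sum_{s \leq \cdot}\bigl((1+\Delta N_s)\log(1+\Delta N_s) - \Delta N_s\bigr).
\]
The continuous part gives the $\tfrac{1}{2}\int \langle\beta_s, c_s\beta_s\rangle \dd A_s$ summand directly. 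For the jump part, using the explicit form of the jumps of $N$---namely $1 + \Delta N_s = U(s, \Delta X^n_s)$ on $\{\Delta X^n_s \neq 0\}$ and $1 + \Delta N_s = (1-\widehat{U}_s)/(1-a_s)$ on $\{\Delta X^n_s = 0\}$---I would compensate by averaging the function $\phi(x) \triangleq x\log x - x + 1$ against the conditional law of $\Delta X^n_s$ given $\mathscr{F}_{s-}$. A direct algebraic simplification then rearranges the result into the sum of $(U\log U - U + 1) \star \nu$ and the accessible-jump summand $(1-\widehat{U}_s)\log\frac{1-\widehat{U}_s}{1-a_s} + \widehat{U}_s - a_s$ appearing in $H^*$. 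With this identification, the standard Novikov--L\'epingle--M\'emin result (cf.\ \cite{JS} and \cite{J79}) gives that $E^P[\exp H^*_{\rho_n}] < \infty$ implies $Z^{\rho_n}$ is a uniformly integrable $P$-martingale, hence $E^P[Z_{\rho_n}] = 1$.

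The main obstacle I anticipate is the bookkeeping in part \textup{(ii)}: correctly identifying $H^*$ as the Novikov--L\'epingle--M\'emin information process for $N$ in the presence of both totally inaccessible jumps (coming from the non-atomic part of $\nu$) and accessible jumps (coming from the predictable atoms of $\nu$, where $\Delta X^n_s$ may or may not vanish). The form of the accessible-jump summand is exactly the output of conditional averaging of $\phi(1+\Delta N_s)$, and verifying it cleanly requires careful use of the Dol\'eans measure together with the definition of $U$ via $M^P_{\mu^X}(Z \mid \widetilde{\mathscr{P}})$ from Section \ref{sec: main}.
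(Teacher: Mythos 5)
Your proposal is, in substance, the same argument the paper uses: the paper's proof of Proposition \ref{prop: Nov} consists of two citations to Jacod's book, namely (i) via \cite[Theorem 8.25]{J79} together with the arguments of \cite[Lemma 12.44]{J79}, and (ii) via \cite[Corollary 8.44]{J79}, which is precisely the compensated L\'epingle--M\'emin criterion that you re-derive by computing the compensator of \(\tfrac12\lle N^c,N^c\rre+\sum_{s\le\cdot}\phi(1+\Delta N_s)\) with \(\phi(x)\triangleq x\log x-x+1\). Your jump bookkeeping (\(1+\Delta N_s=U(s,\Delta X_s)\) on \(\{\Delta X_s\neq 0\}\) and \(1+\Delta N_s=(1-\widehat U_s)/(1-a_s)\) on \(\{\Delta X_s=0\}\)) and the resulting identification of \(H^*\) are correct, and your treatment of (i) --- boundedness of \(H_{\rho_n}\) forces \(\rho_n\le\sigma_m\) once \(m>K\) and \(m\ge n\) (take \(m\) strictly larger than \(K\), not merely \(m\ge K\)), then optional stopping of the uniformly integrable martingale \(Z^{\sigma_m}\) --- is a clean reduction to facts already established earlier in Section \ref{sec:LE}.

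The one step you should repair is in (ii): you justify \(Z^{\rho_n}=\) (stochastic exponential of \eqref{eq: N} stopped at \(\rho_n\)) by invoking Standing Assumption \ref{SA3a}\,(a)'. That assumption contains the conclusion \(E^P[Z_{\rho_n}]=1\), and the proposition is exactly the tool the paper provides to verify this half of (a)'; moreover, under the alternative (b) of the standing assumption, \(P\)-a.s. \(\rho_n<\sigma\) is not available, so the identification is unjustified as written. The fix is supplied by your own hypothesis: since \(H\le 2H^*\) termwise by \eqref{eq: elementary ineq} (at a predictable atom write the summand of \(H^*\) as \((1-a_s)\phi\bigl(\tfrac{1-\widehat U_s}{1-a_s}\bigr)\)), the condition \(E^P[\exp(H^*_{\rho_n})]<\infty\) gives \(P\)-a.s. \(H_{\rho_n}<\infty\), hence \(\rho_n\le\sigma_m\) for all sufficiently large (random) \(m\), and together with \(\rho_n\le\rho_m\) this places \(\of 0,\rho_n\gs\) inside \(\bigcup_{m}\of 0,\sigma_m\wedge\rho_m\gs\) up to \(P\)-evanescence --- which is all that is needed to identify \(Z^{\rho_n}\) with the stopped stochastic exponential and then apply the criterion. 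With that replacement your proof of (ii) is complete and coincides with what \cite[Corollary 8.44]{J79} encodes.
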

\begin{proof}
	The identity \(E^P\big[Z_{\rho_n}\big] = 1\) is implied by (i) due to similar arguments as used in the proof of \cite[Lemma 12.44]{J79} together with \cite[Theorem 8.25]{J79}. 
	Furthermore, \(E^P\big[Z_{\rho_n}\big] = 1\) is implied by (ii) due to \cite[Corollary 8.44]{J79}. 
\end{proof}
Next, we state the main result of this section.


\begin{corollary}\label{coro:LE}
Assume that all solutions to the SMP \((\rho; \eta; B', C, \nu')\) coincide on the \(\sigma\)-field \(\mathscr{F}_{\sigma-}\). Then, for all stopping times \(\xi\) we have 
\begin{align}\label{eq: CMG2}
Q^* = Z_\xi \cdot P \text{ on } \mathscr{F}_{\xi} \cap \{\sigma > \xi\}.
\end{align}
Moreover, we have the following:
\begin{enumerate}
	\item[\textup{(a)}]
The following are equivalent:
\begin{enumerate}
	\item[\textup{(a.i)}]
\(Q^*\)-a.s. \(H_t < \infty\) for all \(t \geq 0\).
\item[\textup{(a.ii)}]
The process \(Z\) is a \(P\)-martingale.
\item[\textup{(a.iii)}] \(Q^* \ll_{\textup{loc}} P\) with \(\frac{\dd Q^*}{\dd P} |_{\mathscr{F}_t} = Z_t\).
\end{enumerate}
\item[\textup{(b)}]
The following are equivalent:
\begin{enumerate}
	\item[\textup{(b.i)}]
\(Q^*\)-a.s. \(H_\rho < \infty\).
\item[\textup{(b.ii)}]
The process \(Z\) is a uniformly integrable \(P\)-martingale.
\item[\textup{(b.iii)}] \(Q^* \ll P\) with \(\frac{\dd Q^*}{\dd P} = Z_\infty\).
\end{enumerate}
\end{enumerate}
\end{corollary}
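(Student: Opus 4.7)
The plan is to reduce the corollary to Theorem \ref{theo:main1} applied to the local $P$-martingale $Z$ constructed in the discussion preceding the statement, with localizing sequence $(\sigma_n)_{n \in \mathbb{N}}$ from \eqref{eq: sigma}. First I would verify the hypotheses of Standing Assumption \ref{SA1}: by construction $Z_0 = 1$, the computation just after \eqref{eq: H bounded} shows that $Z^{\sigma_n}$ is a uniformly integrable $P$-martingale, and combining the bound $H_{\sigma_n} \leq n + 4$ with the fact (from SA \ref{SA 3}) that $P$-a.s.\ $H_\sigma = \infty$ on $\{\sigma < \infty\}$ yields $P$-a.s.\ $\sigma_n < \sigma$. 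Theorem \ref{theo:main1} then produces a probability measure $Q$ on $(\Omega, \mathscr{F})$ solving the SMP $(\sigma \wedge \rho; \eta; B', C, \nu')$, where the initial law coincides with $\eta$ since $Z_0 = 1$.

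Next I would upgrade $Q$ to a solution of the SMP $(\rho; \eta; B', C, \nu')$ by applying Theorem \ref{theo:main1}(d). Under alternative (a)$'$ of SA \ref{SA3a}, condition (d.ii) holds directly. Under alternative (b) of SA \ref{SA 3}, the already-established fact that $Q$ solves the SMP $(\sigma \wedge \rho; \eta; B', C, \nu')$ combined with SA \ref{SA 3}(b) gives $Q$-a.s.\ $\rho_n < \sigma$, which is (d.i). In both cases $Q$ solves the SMP $(\rho; \eta; B', C, \nu')$, so the uniqueness hypothesis of the corollary forces $Q = Q^*$ on $\mathscr{F}_{\sigma-}$.

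The identity \eqref{eq: CMG2} now follows: for every stopping time $\xi$ and every $A \in \mathscr{F}_\xi$, property (b) of $\mathscr{F}_{\xi-}$ recorded in Section \ref{sec: main} gives $A \cap \{\sigma > \xi\} \in \mathscr{F}_{\sigma-}$, so the identification $Q = Q^*$ on $\mathscr{F}_{\sigma-}$ together with Theorem \ref{theo:main1}(a) yields the claim. Part (a) is then a translation of Theorem \ref{theo:main1}(b) through two observations: the event $\{\sigma = \infty\}$ lies in $\mathscr{F}_{\sigma-}$ (another application of property (b) with $G = \Omega$) so its $Q^*$- and $Q$-probabilities agree, and the right-continuity of $H$ assumed in SA \ref{SA 3} gives the pathwise identity $\{\sigma = \infty\} = \{H_t < \infty \text{ for every } t \geq 0\}$, converting (a.i) to (b.i) of Theorem \ref{theo:main1}; the equivalence with (a.ii) and (a.iii) then follows since $P$-a.s.\ $Z = 0$ on $\of \sigma, \infty \of$.

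The main obstacle I expect lies in part (b), specifically the implication (b.i) $\Rightarrow$ (b.ii), because Theorem \ref{theo:main1}(c.i) requires a localizing sequence $(\gamma_n)$ with $\gamma_n \nearrow \sigma$ and $Q(\gamma_n = \sigma = \infty) \to 1$, rather than a pathwise statement on $H_\rho$. I would handle this by taking $\gamma_n \triangleq \inf(t \geq 0 \colon H_t \geq n)$ (without the truncation at $n$ that appears in \eqref{eq: sigma}). The analogue of \eqref{eq: H bounded} then yields $H_{\gamma_n} \leq n + 4$, so $Z^{\gamma_n}$ is uniformly integrable by the same argument used for $Z^{\sigma_n}$; and the combination of $Q^*$-a.s.\ $H_\rho < \infty$ with $P$-a.s.\ $\rho \leq \sigma$ (which follows from (a)$'$ or (b)) together with \eqref{eq: CMG2} applied at $\xi = \gamma_n$ should give both $\gamma_n \nearrow \sigma$ and $Q(\gamma_n = \sigma = \infty) \to 1$. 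Once (b.ii) is secured, (b.iii) follows from part (a) and \cite[Proposition III.3.5]{JS}, and the reverse implications (b.iii) $\Rightarrow$ (b.ii) $\Rightarrow$ (b.i) are routine.
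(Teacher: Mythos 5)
Your proposal follows essentially the same route as the paper: verify Standing Assumption \ref{SA1}, invoke Theorem \ref{theo:main1} to produce $Q$, upgrade $Q$ to a solution of the SMP $(\rho;\eta;B',C,\nu')$ via part (d), identify $Q=Q^*$ on $\mathscr{F}_{\sigma-}$ by uniqueness, and then read off parts (a) and (b) from parts (b) and (c) of Theorem \ref{theo:main1}. Your case split between alternative (a)$'$ of Standing Assumption \ref{SA3a} (giving (d.ii)) and alternative (b) of Standing Assumption \ref{SA 3} (giving (d.i) after first establishing that $Q$ solves the SMP with lifetime $\sigma\wedge\rho$) is a correct and in fact more explicit unpacking of a step the paper compresses into a single sentence. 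The treatment of part (a) via the pathwise identity $\{\sigma=\infty\}=\{H_t<\infty\text{ for all }t\geq 0\}$ and the membership $\{\sigma=\infty\}\in\mathscr{F}_{\sigma-}$ is also sound.

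There is, however, a genuine gap at the end of part (b). You write that the reverse implications $\textup{(b.iii)}\Rightarrow\textup{(b.ii)}\Rightarrow\textup{(b.i)}$ are routine. The step $\textup{(b.iii)}\Rightarrow\textup{(b.ii)}$ is indeed elementary (a non-negative supermartingale starting at $1$ with $E^P[Z_\infty]=1$ is a uniformly integrable martingale), but $\textup{(b.ii)}\Rightarrow\textup{(b.i)}$ is not. From part (a) you only get $Q^*$-a.s.\ $H_t<\infty$ for every \emph{finite} $t$, whereas (b.i) demands $H_\rho<\infty$, i.e.\ finiteness of the increasing limit $\lim_{t\to\infty}H_t$. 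The problematic set is $\{\rho=\infty,\ H_\infty=\infty\}$: there $\sigma=\infty$, so the relation ``$Z=0$ on $\of\sigma,\infty\of$'' is vacuous and gives no control on $Z_\infty$. Closing this step requires the Hellinger-process characterization of absolute continuity, which is precisely why the paper invokes \cite[Theorem 8.21, Lemma 12.44]{J79} at this point; your proposal should cite or reprove that result rather than dismiss the implication as routine.
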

\begin{proof}
First, note that Standing Assumption \ref{SA1} holds. Let \(Q\) be as in Theorem \ref{theo:main1}, which yields, due to Standing Assumption \ref{SA3a}, that \(Q\) solves the SMP \((\rho; \eta; B', C, \nu')\). Furthermore, by hypothesis, \(Q\) coincides with \(Q^*\) on \(\mathscr{F}_{\sigma-}\). Hence, the formula \eqref{eq: CMG2} immediately follows from Theorem \ref{theo:main1} (a).

Since for all \(G \in \mathscr{F}\) we have \(G \cap \{\sigma = \infty\} \in \mathscr{F}_{\sigma-}\), the equivalence (a.i) \(\Leftrightarrow\) (a.ii) follows from Theorem \ref{theo:main1} (b).
If (a.i) holds, then \(Q = Q^*\) on \(\mathscr{F}\). We explain this with more details. Since \(Q = Q^*\) on \(\mathscr{F}_{\sigma-}\) and \(\{\sigma=\infty\}\in \mathscr{F}_{\sigma-}\), we have \(Q\)-a.s. \(\sigma = \infty\). Now, for all \(G \in \mathscr{F}\), we have \(G \cap \{\sigma = \infty\} \in \mathscr{F}_{\sigma-}\) and therefore
\[
Q(G) = Q(G \cap \{\sigma= \infty\}) = Q^*(G \cap \{\sigma= \infty\}) = Q^*(G).
\]
Consequently, (a.i) \(\Rightarrow\) (a.iii) follows from Theorem \ref{theo:main1} (b), too.
Since the implication (a.iii) \(\Rightarrow\) (a.ii) is trivial, this completes the proof of (a).

Set 
\begin{align*}
\gamma_n \triangleq \inf\left(t \geq 0\colon H_t \geq n\right), 
\end{align*}
and note that (b.i) implies that 
\begin{align*}
\lim_{n \to \infty} Q^* (\gamma_n = \sigma = \infty) = 1.
\end{align*}
Furthermore, by \cite[Theorem 8.25]{J79} and similar arguments as used in the proof of \cite[Lemma 12.44]{J79}, the stopped process \(Z^{\gamma_n}\) is a uniformly integrable \(P\)-martingale.
Thus, the implication (b.i) \(\Rightarrow\) (b.ii) follows from Theorem \ref{theo:main1} (c).

If (b.ii) holds, then (a.ii) and thus also (a.i) holds and we have \(Q = Q^*\) on \(\mathscr{F}\). 
Hence, the implication (b.ii) \(\Rightarrow\) (b.iii) is due to Theorem \ref{theo:main1} (c).

Finally, the implication (b.iii) \(\Rightarrow\) (b.i) follows from \cite[Theorem 8.21, Lemma 12.44]{J79} and the proof is complete.
\end{proof}
\begin{remark}\label{rem: JS}
	Recalling the equalities \eqref{eq: Q implies by P} and \eqref{eq: sigma decomp}, if (a)' in Standing Assumption \ref{SA3a} holds, then (a.i) in Corollary \ref{coro:LE} is equivalent to \(Q^*\)-a.s. \(H_\rho < \infty\) on \(\{\rho < \infty\}\).
In this case, the difference between local absolute continuity and absolute continuity is captured by the behavior of \(H_{\rho}\) on the set \(\{\rho = \infty\}\).
\end{remark}
\begin{remark}\label{rem: uni}
	In many cases, for instance due to parametric constraints, all solutions to a SMP are supported on a path space \(\Omega^o \subseteq \Omega\), see \cite[Section 1.11]{pinsky1995positive} for examples. 
	In particular, this is the case when \(\rho = \infty\) with \(\Omega^o\) being the classical Skorokhod space, i.e. the space of all \cadlag functions \([0, \infty) \to \mathbb{R}^d\).
	In such a situation, uniqueness on \(\mathscr{F}_{\sigma-}\) is equivalent to uniqueness on the trace \(\sigma\)-field \(\mathscr{F}_{\sigma-} \cap \Omega^o\). If in addition \(\rho = \tau_\Delta\) on \(\Omega^o\), then 
	\begin{align}\label{eq: uni}\mathscr{F}_{\rho-} \cap \Omega^o = \mathscr{F}_{\tau_{\Delta}-} \cap \Omega^o = \mathscr{F} \cap \Omega^o.\end{align} Here, we use the identity \(\mathscr{F}_{\tau_\Delta-} = \mathscr{F}\), which follows from the following: For all \(G\in \mathscr{B}(\mathbb{R}^d_\Delta)\)
	\begin{align*}
	\{X_t \in G\} &= \left(\{X_t \in G\} \cap \{\tau_{\Delta} \leq t\}\right) \cup \left(\{X_t \in G\} \cap \{\tau_{\Delta} > t\}\right)
	\\&= \begin{cases}\{\tau_{\Delta} \leq t\} \cup \left(\{X_t \in G\} \cap \{\tau_{\Delta} > t\}\right),&\Delta \in G,\\
	\{X_t \in G\} \cap \{\tau_{\Delta} > t\},&\Delta \not \in G.
	\end{cases}
	\end{align*}
	The sets on the right hand side are in \(\mathscr{F}_{\tau_{\Delta}-}\). Thus, we have shown the inclusion \(\mathscr{F} \subseteq \mathscr{F}_{\tau_{\Delta}-}\), which implies the identity \(\mathscr{F}_{\tau_{\Delta}-} = \mathscr{F}\).
	Coming back to the identity \eqref{eq: uni}, we see that uniqueness on \(\mathscr{F}_{\rho-} \cap \Omega^o\) implies uniqueness on \(\mathscr{F}\) and in particular uniqueness on \(\mathscr{F}_{\sigma-}\).
\end{remark}
\begin{corollary}\label{coro: JS}
	Suppose that \(\rho = \infty\), that \(Q^*\) is the only solution of the SMP \((\rho; \eta; B', C, \nu')\) and that \textup{(a)} in Standing Assumption \ref{SA 3} holds. Then, \(Q^* \ll_\textup{loc} P\) with \(\frac{\dd Q^*}{\dd P}|_{\mathscr{F}_t} = Z_t\) and the following are equivalent:
	\begin{enumerate}
		\item[\textup{(i)}] \(P \ll_{\textup{loc}} Q^*\)
		\item[\textup{(ii)}] \(P\)-a.s. \(\1_{\{U = 0\}} \star \nu_{\infty} = 0\) and \(\widehat{U} = 1\Rightarrow a = 1\).
	\end{enumerate}
\end{corollary}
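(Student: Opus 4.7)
The plan is to deduce the corollary from Corollary \ref{coro:LE}. The coincidence hypothesis --- that all solutions of the SMP \((\rho; \eta; B', C, \nu')\) agree on \(\mathscr{F}_{\sigma-}\) --- is trivially satisfied because \(Q^*\) is, by assumption, the unique such solution. What remains is to put oneself into the framework of Standing Assumption \ref{SA3a}, whose hypothesis (a)' strengthens Standing Assumption \ref{SA 3}~(a) by the additional condition \(E^P[Z_{\rho_n}] = 1\). I would achieve this by replacing the \(\rho\)-localization sequence \((\rho_n)_{n \in \mathbb{N}}\) from Standing Assumption \ref{SA 2} by \(\widetilde{\rho}_n \triangleq \rho_n \wedge \gamma_n \wedge n\), where \((\gamma_n)_{n \in \mathbb{N}}\) is any \(P\)-localization sequence for the local \(P\)-martingale \(Z\). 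Then \(Z^{\widetilde{\rho}_n}\) is a bounded-time stop of \(Z^{\gamma_n}\) and hence a uniformly integrable \(P\)-martingale, so \(E^P[Z_{\widetilde{\rho}_n}] = 1\). Since \(\widetilde{\rho}_n \leq \rho_n < \sigma\) \(P\)-a.s., \(\widetilde{\rho}_n \nearrow \infty = \rho\), and \(X^{\widetilde{\rho}_n}\) is still a \(P\)-semimartingale with the correctly stopped triplet, the new sequence is a valid \(\rho\)-localization for which (a)' holds. Crucially, neither \(H\) nor \(\sigma\) is affected by this replacement (both depend only on \((B, C, \nu)\) and on \(\rho\)), and the local \(P\)-martingale \(Z\) coincides \(P\)-a.s.\ with the original at every finite time, since under Standing Assumption \ref{SA 3}~(a) we have \(P\)-a.s.\ \(\sigma_n \wedge \widetilde{\rho}_n \nearrow \infty\).

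With Standing Assumption \ref{SA3a}~(a)' now in force, Corollary \ref{coro:LE} applies. Because \(\rho = \infty\) makes \(\{\rho < \infty\} = \emptyset\), Remark \ref{rem: JS} identifies condition (a.i) of Corollary \ref{coro:LE} with the vacuous statement ``\(Q^*\)-a.s.\ \(H_\rho < \infty\) on \(\{\rho < \infty\}\)'', so (a.i) holds automatically. The equivalence (a.i) \(\Leftrightarrow\) (a.iii) then delivers the first claim: \(Q^* \ll_\textup{loc} P\) with \(\tfrac{\dd Q^*}{\dd P}\big|_{\mathscr{F}_t} = Z_t\) for all \(t \geq 0\).

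For the equivalence (i) \(\Leftrightarrow\) (ii), given the local absolute continuity just established with density \(Z_t\), a standard fact is that \(P \ll Q^*\) on \(\mathscr{F}_t\) is equivalent to \(P\)-a.s.\ \(Z_t > 0\); hence (i) is equivalent to \(P\)-a.s.\ \(Z\) never reaching zero. Since on \(\bigcup_n \of 0, \sigma_n \wedge \widetilde{\rho}_n \gs\) the process \(Z\) is the stochastic exponential of the local \(P\)-martingale \(N\) in \eqref{eq: N}, this is equivalent to \(\Delta N > -1\) throughout. Reading off \eqref{eq: delta N}, the equality \(\Delta N = -1\) at a time \(t\) splits into two cases: (A) \(\Delta X^n_t \neq 0\) with \(U(t, \Delta X^n_t) = 0\); or (B) \(\Delta X^n_t = 0\) at a time \(t\) with \(a_t < 1\) and \(\widehat{U}_t = 1\). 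Using that \(\nu\) is the \(P\)-compensator of \(\mu^X\), the \(P\)-a.s.\ absence of (A) translates to \(\mathbf{1}_{\{U = 0\}} \star \nu_\infty = 0\) \(P\)-a.s., while the \(P\)-a.s.\ absence of (B) translates to the implication \(\widehat{U} = 1 \Rightarrow a = 1\) \(P\)-a.s., yielding exactly (ii). The main obstacle in this outline is the first step: verifying that the substitution of \((\rho_n)\) by \((\widetilde{\rho}_n)\) leaves the density process \(Z\) and the stopping time \(\sigma\) unchanged in the relevant sense. Once this bookkeeping is completed, the remainder reduces to invoking Corollary \ref{coro:LE} and Remark \ref{rem: JS} and performing the routine jump analysis of the stochastic exponential.
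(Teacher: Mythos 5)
Your proposal is correct and follows the same route as the paper, which proves this corollary in a single line by citing Corollary \ref{coro:LE}, Remark \ref{rem: JS}, and \cite[Theorem 12.48]{J79}. Two remarks on the places where you do more work than the paper. First, the replacement of $(\rho_n)_{n\in\mathbb{N}}$ by $(\widetilde{\rho}_n)_{n\in\mathbb{N}}$ to secure $E^P[Z_{\rho_n}]=1$ is sound --- and you correctly observe that neither $H$ nor $\sigma$ is affected, and that under (a) with $\rho=\infty$ the two candidate density processes agree $P$-a.s. on all of $\of 0,\infty\of$ --- but it is arguably redundant: Standing Assumption \ref{SA3a} is a standing assumption for the whole section, so the clause ``(a) in Standing Assumption \ref{SA 3} holds'' in the corollary's hypothesis is meant to single out the (a)-alternative of that assumption, which after the substitution made in Standing Assumption \ref{SA3a} is (a)' and already carries $E^P[Z_{\rho_n}]=1$; that this is the intended reading is confirmed by the paper's reliance on Remark \ref{rem: JS}, which is stated under (a)'. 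Second, your jump analysis identifying ``$P$-a.s. $Z$ never hits zero'' with the absence of cases (A) and (B) and then with condition (ii) is precisely the content of \cite[Theorem 12.48]{J79}, which the paper cites rather than re-derives; your reasoning is valid because, as you note, $P$-a.s. $\sigma=\rho=\infty$ so that $Z$ coincides with the stochastic exponential of \eqref{eq: N} on all of $\of 0,\infty\of$ up to $P$-evanescence.
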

\begin{proof} This follows from Corollary \ref{coro:LE}, Remark \ref{rem: JS} and  \cite[Theorem 12.48]{J79}.\end{proof}

This result shows that the \(Q^*\)-integrability condition and the local uniqueness conditions imposed in \cite[Theorem III.5.34]{JS} can be replaced by a simple uniqueness condition together with a \(P\)-integrability condition.

Another consequence of Corollary \ref{coro:LE} is that if \(Q^*\) is unique and \(H\) is finite and deterministic, then \(Q^* \ll_{\textup{loc}} P\) with \(\tfrac{\dd Q^*}{\dd P}|_{\mathscr{F}_t}= Z_t\). This observation can be proven directly with the same strategy as used in the proof of Theorem \ref{theo:main1}:
Indeed, if \(H\) is finite and deterministic, the local \(P\)-martingale \(Z\) has a deterministic \(P\)-localizing sequence, namely \((\sigma_n)_{n \in \mathbb{N}}\). Consequently, \(Z\) is a true \(P\)-martingale. Now, applying Parathasaraty's extension theorem to the standard system \((\Omega, \mathscr{F}^o_n)_{n \in \mathbb{N}}\), see \cite{follmer72}, \(Q^*\) can be constructed from \(P\) as an extension of the consistent sequence \((Z_n \cdot P)_{n \in \mathbb{N}}\). By construction, \(Q^* \ll_{\textup{loc}} P\) with \(\tfrac{\dd Q^*}{\dd P}|_{\mathscr{F}_t} = Z_t\).


In the following section we comment on related literature. We will also see further applications of Corollary \ref{coro:LE}.
\section{Comments on the Literature}\label{sec: comment}
In this section we relate our results to the literature. In Section \ref{sec: Diff}, we show that Corollary \ref{coro:LE} is in line with the main results of Cherny and Urusov \cite{Cherny2006} and Mijatovi\'c and Urusov \cite{MU(2012)}. In Section \ref{sec: CFY}, we relate Corollary \ref{coro:LE} to the main result of Cheridito, Filipovi\'c and Yor \cite{CFY}.

\subsection{Absolute Continuity of One-Dimensional Diffusions}\label{sec: Diff}
The (local) absolute continuity of laws of one-dimensional diffusions was intensively studied by Cherny and Urusov \cite{Cherny2006}, who gave deterministic equivalent conditions under the Engelbert-Schmidt conditions. In the same setting, deterministic equivalent conditions for the martingale property of stochastic exponentials were given by Mijatovi\'c and Urusov \cite{MU(2012)}. 
Both approaches are based on so-called separation times and are quite different from ours. As we will illustrate in this section, their results can also be deduced from Corollary \ref{coro:LE}. 

We start with a formal introduction to the setup.
In the following, \(\nu\) will always be the zero measure and we will remove it from all notations. 

Let \(b,\beta\colon\mathbb{R} \to \mathbb{R}\) and \(c\colon \mathbb{R} \to [0, \infty)\) be Borel functions. We extend these functions to \(\mathbb{R}_\Delta\) by setting them to zero outside \(\mathbb{R}\).
Furthermore, we set 
\begin{align*}
B &\triangleq \int_0^\cdot b (X_s) \dd s,\\ B' &\triangleq \int_0^\cdot \left(b(X_s) + (\beta c)(X_s)\right) \dd s,\\ C &\triangleq \int_0^\cdot c(X_s)\dd s,
\end{align*}
and define the stopping times
\begin{align}\label{eq: exit}
\rho_n \triangleq \inf(t \geq 0 \colon \|X_t\| > n) \wedge n,\quad \rho \triangleq \lim_{n \to \infty} \rho_n,
\end{align}
where \(\|\Delta\|\triangleq \infty\).
\begin{SAS}\label{SA 4}
The Engelbert-Schmidt conditions hold, i.e.
\begin{align*}
c> 0,\qquad \frac{1 + |b| + |b + \beta c|}{c} \in L^1_\textup{loc} (\mathbb{R}).
\end{align*}
\end{SAS}
In this case, for all \(x \in \mathbb{R}\) the SMP \((\rho; \delta_x; B, C)\) has a solution \(P\) with \(\rho\)-localizing sequence \((\rho_n)_{n \in \mathbb{N}}\) and the SMP \((\rho; \delta_x; B', C)\) has a solution \(Q^*\). 
Let \(\Omega^o\) be the set of all \(\omega \in \Omega\) which are continuous on \([0, \tau_{\Delta}(\omega))\) and \(\lim_{t \nearrow \tau_{\Delta} (\omega)} \omega(t) = - \infty\) or \(\lim_{t \nearrow \tau_{\Delta} (\omega)} \omega(t) = + \infty\) whenever \(\tau_{\Delta}(\omega) \in (0, \infty)\).
All solutions to each of these SMPs are supported on the set \(\Omega^o\) and coincide on \(\mathscr{F}_{\rho-} \cap \Omega^o\). Thus, all solutions to these SMPs coincide on \(\mathscr{F}\), see Remark \ref{rem: uni}. 
In particular, we have \(P\)-a.s. and \(Q^*\)-a.s. \begin{align}\label{eq: tau strict}
\rho_n < \rho.\end{align}
In other words, \(P, Q^*\) and \((\rho_n)_{n \in \mathbb{N}}\) are as in Standing Assumption \ref{SA 2} and the uniqueness assumption of Corollary \ref{coro:LE} holds.
Proofs are given in \cite[Section 5.5]{KaraShre} or \cite{MANA:MANA19911510111}. 

\begin{SAS}\label{SA 5} We have 
\begin{align}\label{eq: finite qv}
\beta^2 \in L^1_\textup{loc}(\mathbb{R}).
\end{align}
\end{SAS}
Standing Assumption \ref{SA 5} is also imposed in \cite{MU(2012)}, but not in \cite{Cherny2006}, where it is shown to be necessary for \(Q^* \ll_{\textup{loc}} P\).

The condition \eqref{eq: finite qv} implies that
\begin{align}\label{eq: finiteness inte}
\int_0^t (\beta^2 c)(X_s) \dd s < \infty,\quad P\text{-a.s. and } Q^*\text{-a.s. for all } t < \rho,
\end{align}
see \cite[Theorem 2.6]{mijatovic2012} and Remark \ref{rem: jump inf}.

Denote
\begin{align*}
H \triangleq \int_0^{\cdot \wedge \rho} (\beta^2 c)(X_s)\dd s 
\end{align*}
and set \(\sigma_n\) and \(\sigma\) as in \eqref{eq: sigma}. We note that \eqref{eq: tau strict} and \eqref{eq: finiteness inte} imply \(P\)-a.s. 
\begin{align}\label{eq:tau sigma ineq}
\rho_n < \rho \leq \sigma.
\end{align}
Thus, Standing Assumption \ref{SA 3} (a) holds.
We define a non-negative local \(P\)-martingale \(Z\) as in Section \ref{sec:LE}.

In this setting Standing Assumption \ref{SA3a} holds, too.
If the function \(\beta^2 c\) is locally bounded, \(E^P\big[Z_{\rho_n}\big] = 1\) follows immediately from Novikov's condition, see also Proposition \ref{prop: Nov}. However, under the weaker assumption that \(\beta^2 \in L^1_\textup{loc}(\mathbb{R})\) the verification becomes more challenging. We refer to \cite[Lemma 5.30]{Cherny2006} for a proof.

Consequently, the following result follows from Corollary \ref{coro:LE} and Remark \ref{rem: JS}.

\begin{corollary}\label{coro: diff}
	\begin{enumerate}
		\item[\textup{(a)}]
	The following are equivalent:\begin{enumerate}
		\item[\textup{(a.i)}]
		\(Q^*\)-a.s. \(H_{\rho} = \int_0^{\rho} (\beta^2 c)(X_s)\dd s <\infty\) on \(\{\rho < \infty\}\).
		\item[\textup{(a.ii)}]
		\(Z\) is a \(P\)-martingale.
		\item[\textup{(a.iii)}]
		\(Q^* \ll_{\textup{loc}} P\) with \(\frac{\dd Q^*}{\dd P}|_{\mathscr{F}_{t}} = Z_t.\)
	\end{enumerate}
		\item[\textup{(b)}]
The following are equivalent:\begin{enumerate}
	\item[\textup{(b.i)}]
	\(Q^*\)-a.s. \(H_{\rho} = \int_0^{\rho} (\beta^2 c)(X_s)\dd s <\infty\).
	\item[\textup{(b.ii)}]
	\(Z\) is a uniformly integrable \(P\)-martingale.
	\item[\textup{(b.iii)}]
	\(Q^* \ll P\) with \(\frac{\dd Q^*}{\dd P} = Z_\infty.\)
\end{enumerate}
\end{enumerate}
\end{corollary}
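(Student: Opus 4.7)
The plan is to apply Corollary \ref{coro:LE} directly, together with Remark \ref{rem: JS} for the refinement in part (a). The bulk of the work reduces to checking that the Standing Assumptions of Section \ref{sec:LE} are in force in the present one-dimensional diffusion setting.

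First I would observe that, since $\nu \equiv 0$, we have $a_t \equiv 0$ and $\widehat U_t \equiv 0$, so the assumptions $a_t \leq 1$, $\widehat U_t \leq 1$ and the compatibility condition \eqref{cond:Ya} are trivially satisfied. Standing Assumption \ref{SA 2} is part of the setup of this subsection. Combining \eqref{eq: tau strict} with the finiteness \eqref{eq: finiteness inte} of $H$ up to $\rho$ under $P$ yields $P$-a.s. $\rho_n < \rho \leq \sigma$, which is part (a) of Standing Assumption \ref{SA 3}. To upgrade this to part (a)' of Standing Assumption \ref{SA3a} one needs the identity $E^P[Z_{\rho_n}] = 1$; this is the key nontrivial ingredient and is quoted from \cite[Lemma 5.30]{Cherny2006}, which is precisely why \eqref{eq: finite qv} is imposed.

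Next I would verify the uniqueness hypothesis of Corollary \ref{coro:LE}. Any solution of the SMP $(\rho; \delta_x; B', C)$ is supported on the path space $\Omega^o$ and, as recorded in the setup, any two such solutions coincide on $\mathscr{F}_{\rho-} \cap \Omega^o$; by Remark \ref{rem: uni} this agreement transfers to all of $\mathscr{F}$, and in particular to $\mathscr{F}_{\sigma-}$.

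With all hypotheses verified, part (b) of the corollary is immediate from Corollary \ref{coro:LE}(b), noting that here $H = \int_0^{\cdot \wedge \rho}(\beta^2 c)(X_s)\dd s$. For part (a), I would invoke Remark \ref{rem: JS}: since (a)' of Standing Assumption \ref{SA3a} holds, condition (a.i) of Corollary \ref{coro:LE}, namely $Q^*$-a.s. $H_t < \infty$ for all $t \geq 0$, is equivalent to $Q^*$-a.s. $H_\rho < \infty$ on $\{\rho < \infty\}$, which is precisely (a.i) of the present statement. The chain (a.i) $\Leftrightarrow$ (a.ii) $\Leftrightarrow$ (a.iii) then follows directly from Corollary \ref{coro:LE}(a). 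The main obstacle in this outline is the imported integrability $E^P[Z_{\rho_n}] = 1$, which would require a delicate argument under only the assumption \eqref{eq: finite qv} if one had to prove it from scratch, but which is already available in the literature.
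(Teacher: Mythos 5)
Your proposal is correct and follows essentially the same route as the paper: verify that the Standing Assumptions of Section~3 and the uniqueness hypothesis of Corollary~\ref{coro:LE} hold in the Engelbert--Schmidt setting (with the crucial identity $E^P[Z_{\rho_n}]=1$ imported from \cite[Lemma~5.30]{Cherny2006}), then read off part~(b) from Corollary~\ref{coro:LE}(b) and part~(a) from Corollary~\ref{coro:LE}(a) via the reformulation in Remark~\ref{rem: JS}. The paper's own argument is precisely this chain, summarized by ``Consequently, the following result follows from Corollary~\ref{coro:LE} and Remark~\ref{rem: JS}.''
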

	A relation of the convergence of an integral functional and the martingale property of a stochastic exponential is also suggested in \cite[Section 2.6]{mijatovic2012}. Corollary \ref{coro: diff} confirms a one-to-one relation. 

Let us now explain that this corollary is in line with the deterministic conditions for the (local) absolute continuity as given in \cite{Cherny2006} and the (uniformly integrable) \(P\)-martingale property of \(Z\) as given in \cite{MU(2012)}. We start with notation:
\begin{equation}\label{eq: diff definitions}
\begin{split}
p (x) &\triangleq \exp \left( - \int_0^x \frac{2 (b(y) + (\beta c)(y))}{c(y)} \dd y \right), \ x \in \mathbb{R}, \\
	s(x) &\triangleq \int_0^x p(y)\dd y, \ x \in \mathbb{R},\\
	s(+ \infty) &\triangleq \lim_{x \nearrow + \infty} s(x),\\
	s(- \infty) &\triangleq \lim_{x \searrow - \infty} s(x).
\end{split}
\end{equation}
Furthermore, for \(z \in \{- \infty, \infty\}\) and a Borel function \(f\colon \mathbb{R} \to [0, \infty)\) we write \(f \in L^1_\textup{loc}(z)\) if there is an \(x \in \mathbb{R}\) such that \(\int_{x \wedge z}^{x \vee z} f(y)\dd y < \infty\). 
We define the following conditions:
\begin{gather}
s(+ \infty) = \infty,\label{cond +.1}\\
s(+ \infty) < \infty\quad \textup{and}\quad \frac{s (+ \infty) - s}{p c} \not\in L^1_\textup{loc} (\infty),\label{cond +.2}\\
s(+ \infty) < \infty \quad\textup{and} \quad \frac{(s(+ \infty) - s)\beta^2}{p} \in L^1_\textup{loc}(- \infty),\label{cond +.3}
\end{gather}
and similarly 
\begin{gather}
s(- \infty) = -\infty,\label{cond -.1}\\
s(- \infty) >- \infty\quad \textup{and}\quad \frac{s - s(- \infty)}{pc} \not\in L^1_\textup{loc} (-\infty),\label{cond -.2}\\
s(- \infty) >- \infty \quad\textup{and} \quad \frac{(s- s(- \infty))\beta^2}{p} \in L^1_\textup{loc}(- \infty).\label{cond -.3}
\end{gather}
Let us relate these conditions to (a.i) in Corollary \ref{coro: diff}. 
Define 
\begin{align*}
\rho_{+ } &\triangleq \lim_{n \to + \infty} \inf(t \geq 0 \colon X_t > n),\\
\rho_{- } & \triangleq \lim_{n \to + \infty} \inf(t \geq 0 \colon X_t < - n),
\end{align*}
and note that \(Q\)-a.s. \begin{align*}
\{\rho < \infty\} = \{\rho_{+ } < \infty\} \cup \{\rho_{- } < \infty\}.\end{align*}
We discuss the finiteness of \(H_{\rho}\) separately on the two sets on the right hand side.
By Feller's test for explosion, see \cite[Propositions 2.4, 2.5, 2.12]{mijatovic2012}, \(\{\rho_{+} < \infty\}\) is \(Q^*\)-null if and only if either \eqref{cond +.1} or \eqref{cond +.2} holds. 

If \(Q^*(\rho_{+} < \infty) > 0\), then \(H_{\rho}\) is \(Q^*\)-a.s. finite on \(\{\rho_{+ } < \infty\}\) 
if and only if \eqref{cond +.3} holds, see \cite[Theorem 2.11]{mijatovic2012}.

Similar arguments yield that \(H_{\rho}\) is \(Q^*\)-a.s. finite on \(\{\rho_{-} < \infty\}\) if and only if one of the conditions \eqref{cond -.1}, \eqref{cond -.2} or \eqref{cond -.3} holds. Finally, we recover the following version of \cite[Theorem 2.1]{MU(2012)} and \cite[Corollary 5.2]{Cherny2006} from Corollary \ref{coro: diff}.
\begin{corollary}
	\textup{(a.i), (a.ii)} and \textup{(a.iii)} from Corollary \ref{coro: diff} are equivalent to the following:
	\begin{enumerate}
		\item[\textup{(a.iv)}] One of the conditions \eqref{cond +.1}, \eqref{cond +.2} or \eqref{cond +.3} holds and one of the conditions \eqref{cond -.1}, \eqref{cond -.2} or \eqref{cond -.3} holds.
	\end{enumerate} 
\end{corollary}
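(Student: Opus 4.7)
The plan is to reduce the statement to the equivalence (a.i) $\Leftrightarrow$ (a.iv), since the chain (a.i) $\Leftrightarrow$ (a.ii) $\Leftrightarrow$ (a.iii) has been established in Corollary \ref{coro: diff}. All of the probabilistic input needed is already summarized in the paragraph preceding the corollary, so the task is mostly to assemble it carefully.

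First I would decompose the event $\{\rho < \infty\}$. Since $Q^*$-a.s. the paths of $X$ are continuous on $[0,\rho)$ and diverge to $\pm\infty$ on $\{\rho<\infty\}$, we have
\[
\{\rho<\infty\} = \{\rho_+ < \infty\}\cup \{\rho_- < \infty\}\quad Q^*\text{-a.s.,}
\]
and these two events are $Q^*$-a.s. disjoint. Consequently, $H_\rho<\infty$ $Q^*$-a.s. on $\{\rho<\infty\}$ if and only if $H_\rho<\infty$ $Q^*$-a.s. on each of the sets $\{\rho_+<\infty\}$ and $\{\rho_-<\infty\}$ separately.

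Next I would analyze the two sides symmetrically, say the $+\infty$ side. Two cases arise. If $Q^*(\rho_+<\infty)=0$, then trivially $H_\rho$ is $Q^*$-a.s. finite on $\{\rho_+<\infty\}$, and by Feller's test for explosion at $+\infty$ (as formulated in \cite[Propositions 2.4, 2.5, 2.12]{mijatovic2012}, applied to the diffusion governed by $(B',C)$), $Q^*(\rho_+<\infty)=0$ is equivalent to \eqref{cond +.1} or \eqref{cond +.2}. If instead $Q^*(\rho_+<\infty)>0$, then \eqref{cond +.1} and \eqref{cond +.2} fail, so that $s(+\infty)<\infty$, and \cite[Theorem 2.11]{mijatovic2012} applied to the integral functional $\int_0^{\cdot\wedge\rho}(\beta^2c)(X_s)\,\de s$ tells us that $H_\rho$ is $Q^*$-a.s. finite on $\{\rho_+<\infty\}$ if and only if \eqref{cond +.3} holds. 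Combining the two cases, finiteness of $H_\rho$ on $\{\rho_+<\infty\}$ is equivalent to at least one of \eqref{cond +.1}, \eqref{cond +.2}, \eqref{cond +.3}. The same reasoning at $-\infty$ (replacing $s(+\infty)-s$ by $s-s(-\infty)$ and the neighbourhood of $+\infty$ by one of $-\infty$) gives the analogous equivalence with \eqref{cond -.1}, \eqref{cond -.2}, \eqref{cond -.3}. Intersecting yields (a.i) $\Leftrightarrow$ (a.iv), which completes the proof modulo the previously established equivalences.

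The only delicate point I expect is the case distinction for the $+\infty$ (resp. $-\infty$) side: one has to be careful to apply \cite[Theorem 2.11]{mijatovic2012} under the condition $s(+\infty)<\infty$, and to notice that if $s(+\infty)=\infty$ the entire event $\{\rho_+<\infty\}$ is a $Q^*$-null set so that \eqref{cond +.3} becomes irrelevant. Since (a.iv) is phrased disjunctively, this matches exactly the dichotomy produced by Feller's test. The two Engelbert--Schmidt and square-integrability assumptions (Standing Assumptions \ref{SA 4} and \ref{SA 5}) guarantee that the diffusion with drift $b+\beta c$ is a well-defined regular diffusion, that $\beta^2c$ is locally integrable along paths, and that the results of \cite{mijatovic2012} apply; no further estimates are needed.
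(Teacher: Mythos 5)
Your proof is correct and follows essentially the same route the paper sketches in the paragraph immediately preceding the corollary: decompose $\{\rho<\infty\}$ into the ($Q^*$-a.s. disjoint) union $\{\rho_+<\infty\}\cup\{\rho_-<\infty\}$, apply Feller's test to identify when each piece is $Q^*$-null with the disjunction of the first two conditions, and when it has positive probability invoke \cite[Theorem 2.11]{mijatovic2012} on the integral functional to reduce to the third condition, then intersect the two sides.
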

Let us now explain when \(H_{\rho}\) is \(Q^*\)-a.s. finite everywhere. We distinguish four cases:
\begin{enumerate}
	\item[1.] If \(s(+ \infty) = \infty\) and \(s(- \infty) = \infty\), then \(Q^*\)-a.s. \(H_{\rho} < \infty\) if and only if Lebesgue almost everywhere \(\beta = 0\), see \cite[Theorem 2.10]{mijatovic2012}.
	\item[2.] If \(s(+ \infty) < \infty\) and \(s(- \infty) = \infty\), then \(Q^*\)-a.s. \(H_{\rho} < \infty\) if and only if the second part in \eqref{cond +.3} holds, see \cite[Proposition 2.4, Theorem 2.11]{mijatovic2012}.
	\item[3.] If \(s(+ \infty) = \infty\) and \(s(- \infty) < \infty\), then \(Q^*\)-a.s. \(H_{\rho} < \infty\) if and only if the second part in \eqref{cond -.3} holds, see \cite[Proposition 2.4, Theorem 2.11]{mijatovic2012}.
	\item[4.] If \(s(+ \infty) < \infty\) and \(s(- \infty) < \infty\), then \(Q^*\)-a.s. \(H_{\rho} < \infty\) if and only if the second parts in \eqref{cond -.3} and \eqref{cond +.3} hold, see \cite[Proposition 2.4, Theorem 2.11]{mijatovic2012}.
\end{enumerate}
We deduce the following version of \cite[Theorem 2.3]{MU(2012)} and \cite[Corollary 5.1]{Cherny2006} from Corollary \ref{coro: diff}.
\begin{corollary}
	\textup{(b.i), (b.ii)} and \textup{(b.iii)} from Corollary \ref{coro: diff} are equivalent to the following:
		\begin{enumerate}
		\item[\textup{(b.iv)}] One of the following conditions holds:
		\begin{enumerate}
			\item[\textup{(1)}] Lebesgue almost everywhere \(\beta = 0\).
			\item[\textup{(2)}]\eqref{cond +.3} and \eqref{cond -.1} hold.
			\item[\textup{(3)}] \eqref{cond +.1} and \eqref{cond -.3} hold.
			\item[\textup{(4)}] \eqref{cond +.3} and \eqref{cond -.3} hold.
		\end{enumerate}
	\end{enumerate} 
\end{corollary}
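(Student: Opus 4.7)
The plan is to leverage the fact that Corollary \ref{coro: diff} already gives the equivalence of (b.i), (b.ii) and (b.iii), so that it remains only to verify (b.i) $\Leftrightarrow$ (b.iv). Concretely, I will match the $Q^*$-a.s.\ finiteness of the nondecreasing functional $H_\rho = \int_0^\rho (\beta^2 c)(X_s)\,\dd s$ against the four deterministic alternatives listed in (b.iv), exploiting the four-way case analysis on the boundary behavior of the scale function $s$ that was carried out immediately before the corollary.

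First I would split into the four cases according to whether $s(+\infty)$ and $s(-\infty)$ are finite, and observe the key compatibility remark: conditions \eqref{cond +.3} and \eqref{cond -.3} each require scale finiteness at the respective endpoint, while \eqref{cond +.1} and \eqref{cond -.1} require the opposite. Therefore in each of the four scale regimes only one of alternatives (2)--(4) of (b.iv) is logically possible, and (1) is always possible. In Case 1 ($s(+\infty)=\infty$, $s(-\infty)=-\infty$), only (b.iv)(1) can hold; in Case 2 ($s(+\infty)<\infty$, $s(-\infty)=-\infty$), the only relevant scale-restricted candidate is (b.iv)(2); and analogously (b.iv)(3) for Case 3 and (b.iv)(4) for Case 4.

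Next, in each case I would compare with the characterization of $\{H_\rho<\infty\}$ recalled before the corollary, which appeals to \cite[Theorems 2.10, 2.11]{mijatovic2012}. In Case 1, this gives $Q^*$-a.s.\ $H_\rho<\infty$ iff $\beta=0$ Lebesgue-a.e., which is exactly (b.iv)(1). In Cases 2, 3, 4, the cited results characterize $Q^*$-a.s.\ $H_\rho<\infty$ precisely through the integrability condition appearing as the second part of \eqref{cond +.3}, respectively \eqref{cond -.3}, respectively both. Since the scale restrictions in \eqref{cond +.3}, \eqref{cond -.1}, etc., are automatic in the relevant case, this matches (b.iv)(2), (b.iv)(3), and (b.iv)(4) respectively. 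Finally, one has to observe that (b.iv)(1) is consistent with all four cases: if $\beta=0$ Lebesgue-a.e., then $H\equiv 0$, so (b.i) trivially holds, and conversely in Cases 2--4 the integrability condition in (b.iv)(2)--(4) is automatic when $\beta^2=0$ a.e.\ so (b.iv)(1) is already subsumed by (b.iv)(2)--(4); this shows that in Cases 2--4 the disjunction (b.iv) reduces exactly to the relevant integrability condition, which is $Q^*$-a.s.\ equivalent to $H_\rho<\infty$.

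The proof is essentially bookkeeping: the real analytic content is packaged into \cite[Theorems 2.10, 2.11]{mijatovic2012} for integral functionals of one-dimensional diffusions up to explosion, together with Feller's test, so I do not expect a serious obstacle. The only subtlety is the careful check that the scale/integrability conditions appearing in \eqref{cond +.2}, \eqref{cond +.3}, \eqref{cond -.2}, \eqref{cond -.3} are indeed the ones governing, in the presence of $Q^*$ (whose drift is $b+\beta c$), the exit behavior of $X$ at $\pm\infty$ and the convergence of $H_\rho$. This is why one must pass through the conditions via the $Q^*$-scale function $s$ in \eqref{eq: diff definitions} (not the $P$-scale), matching the $Q^*$-dynamics $\dd X_t = (b+\beta c)(X_t)\,\dd t + \sqrt{c(X_t)}\,\dd W^{Q^*}_t$ used in \cite{mijatovic2012}.
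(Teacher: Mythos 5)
Your proposal is correct and follows essentially the same route as the paper: the paper's argument is precisely the four-way case distinction on the finiteness of \(s(+\infty)\) and \(s(-\infty)\) carried out immediately before the corollary, where \(Q^*\)-a.s.\ finiteness of \(H_\rho\) is matched case by case against the conditions \eqref{cond +.1}, \eqref{cond +.3}, \eqref{cond -.1}, \eqref{cond -.3} via \cite[Proposition 2.4, Theorems 2.10, 2.11]{mijatovic2012}, and then combined with Corollary \ref{coro: diff}. Your additional bookkeeping (which alternatives of (b.iv) are logically possible in each scale regime, and that (b.iv)(1) is subsumed by the integrability conditions when the scale is finite) is exactly the implicit content of the paper's deduction, including the use of the \(Q^*\)-scale function from \eqref{eq: diff definitions}.
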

\begin{remark}
	Let us comment on the first case, which is closely related to the recurrence of \(P\).
	It is well-known that \(P\) is recurrent if and only if \(s(+ \infty) = \infty\) and \(s(- \infty) = \infty\), see \cite[Proposition 5.5.22]{KaraShre} or \cite[Theorem 5.1.1]{pinsky1995positive}. 
	Here, we call \(P\) recurrent if 
	\[
	P(X_t = y \text{ for some } t \geq 0) = 1 \text{ for all } y \in \mathbb{R}.
	\]
	In particular, \(P\) is conservative. 
	For recurrent diffusions, we have an ergodic theorem, i.e. \(P\)-a.s.
	\begin{align*}
	\frac{\int_0^t f(X_s)\dd s}{\int_0^t g(X_s)\dd s} \xrightarrow{\quad t \to \infty\quad} \frac{\int f(y) m(\dd y)}{\int g(y)m(\dd y)},  
	\end{align*}
	where \(m\) is the speed measure of \(X\) and \(f, g \colon \mathbb{R} \to [0, \infty)\) are Borel functions such that \(\int f(y)m(\dd y) < \infty\) and \(\int g(y)m(\dd y) > 0\). For a proof see \cite[Theorem 20.14]{Kallenberg}. If \(Q^* \ll P\), then the \(P\)-a.s. convergence transfers to \(Q^*\) and it follows that the speed measures of \(X\) coincides under \(P\) and \(Q^*\). Due to the specific choices of the drift and diffusion coefficients for \(P\) and \(Q^*\), this can only be if \(\beta = 0\) Lebesgue almost everywhere.  
\end{remark}
For explicit examples of all possible situations, we refer to \cite{Cherny2006}.
In summary, we have seen that Corollary \ref{coro:LE} is in line with the results proven in \cite{Cherny2006,MU(2012)}.

 \subsection{Absolute Continuity of It\^o-Jump-Diffusions}\label{sec: CFY}
In this section, we compare Corollary \ref{coro:LE} to the main result of Cheridito, Filipovi\'c and Yor \cite{CFY}. The proof in \cite{CFY} heavily relies on the concept of local uniqueness, which is in Markovian setups implied by the existence of unique solutions for all deterministic initial values, see \cite[Theorem III.2.40]{JS}.

Next, we recall a version of the setup of \cite{CFY}. We stress that \cite{CFY} includes a killing rate, which is not included in our case. Moreover, the underlying filtered spaces are different, such that the uniqueness assumptions are not identical, but very similar. 

Let \(b, \beta \colon \mathbb{R}^d \to \mathbb{R}^d\) and \(c \colon \mathbb{R}^d \to \mathbb{S}^d\) be Borel functions and let \(K\) be a Borel transition kernel from \(\mathbb{R}^d\) into \(\mathbb{R}^d\). Furthermore, let \(U \colon \mathbb{R}^d \times \mathbb{R}^d\to (0, \infty)\) be Borel. We extend these functions to \(\mathbb{R}^d_\Delta\) by setting them zero outside \(\mathbb{R}^d\). More precisely, we mean here the zero vector, the zero matrix etc.
In \cite{CFY} the following local boundedness assumptions are imposed:

	The maps \begin{center}\(b, b + c \beta, c, \int \left(1 \wedge \|y\|^2\right) K(\cdot, \dd y)\) and \(\int \left(1 + \|y\|^2\right)U(\cdot, y)K(\cdot, \dd y)\)\end{center} are locally bounded.

We set 
\begin{align*}
B &\triangleq \int_0^\cdot b (X_s) \dd s,\\
B' &\triangleq \int_0^\cdot \left(b (X_s) + c (X_s) \beta (X_s)\right) \dd s,\\
C &\triangleq \int_0^\cdot c(X_s) \dd s,\end{align*}
and \begin{align*}
\nu(\dd t \times \dd x) &\triangleq K(X_{t-}, \dd x)\dd t,\\
\nu'(\dd t \times \dd x) &\triangleq U(X_{t-}, x) K(X_{t-}, \dd x) \dd t.
\end{align*}
Let \(\rho_n\) and \(\rho\) be as in \eqref{eq: exit} and let \(\eta\) be a probability measure on \((\mathbb{R}^d, \mathscr{B}(\mathbb{R}^d))\).

In the following,
\(P\) is a solution to the SMP \((\rho; \eta; B, C, \nu)\) with \(\rho\)-localizing sequence \((\rho_n)_{n \in \mathbb{N}}\) and \(Q^*\) is a solution to the SMP \((\rho; \eta; B', C, \nu')\).

Define
\begin{align*}
H^* \triangleq \frac{1}{2} &\int_0^{\cdot \wedge \rho} \la \beta(X_t), c(X_t) \beta (X_t) \ra \dd t \\&+ \int_0^{\cdot \wedge \rho}\int\left(U(X_{t-}, x) \log(U(X_{t-}, x)) - U(X_{t-}, x) + 1\right) K(X_{t-}, \dd x)\dd t,
\end{align*}
see also Proposition \ref{prop: Nov}.
The main result in \cite{CFY} can be rephrased as follows:

If \(Q^*\) is the only solution to the SMP \((\rho; \eta; B', C, \nu')\) and 
\begin{align}\label{eq: loc novi}
E^P \left[ \exp\left(H^*_{\rho_n}\right)\right] < \infty \text{ for all } n \in \mathbb{N},
\end{align}
then a formula like \eqref{eq: CMG2} holds for all \((\mathscr{F}^o_t)_{t \geq 0}\)-stopping times \(\xi\), and \(Q^* \ll_{\textup{loc}} P\) holds if \(Q^*\) is conservative.

The condition \eqref{eq: loc novi} is a Novikov-type condition, which ensures that \(Z^{\rho_n}\) is a uniformly integrable \(P\)-martingale, where \(Z\) is defined as in Section \ref{sec:LE}, see also Standing Assumption \ref{SA1}. In particular, it implies that \(E^P \big[Z_{\rho_n}\big] = 1\), see Proposition \ref{prop: Nov} and Standing Assumption \ref{SA3a}.


Next, we compare this statement to Corollary \ref{coro:LE}.
Let \(H\) be defined as in Section \ref{sec:LE}, i.e. in this case
\begin{equation}\label{eq: H 2}
\begin{split}
H \triangleq \int_0^{\cdot \wedge \rho} &\la \beta (X_t), c(X_t) \beta(X_t) \ra \dd t \\&+ \int_0^{\cdot \wedge \rho} \int \left(1 - \sqrt{U(X_{t-}, x)}\right)^2 K(X_{t-},\dd x)\dd t.
\end{split}
\end{equation}
We note that \(H \leq 2 H^*\), which follows from the elementary inequality
\begin{align}\label{eq: elementary ineq}
\left(1 - \sqrt{x}\right)^2 \leq x \log(x) - x + 1\textup{ for all } x > 0.
\end{align}
Thus, \eqref{eq: loc novi} implies that \(P\)-a.s. \(H_{\rho_n} < \infty\), which yields that \(P\)-a.s. \(\rho \leq \sigma\). In this setting, it can be shown that \(P\)-a.s. \(\rho_n < \rho\), see \cite[Lemma 3.1 and the paragraph below its proof]{CFY}. Thus, \(P\)-a.s. \(\rho_n < \sigma\), i.e. Standing Assumption \ref{SA 2} (a) holds.
Consequently, Corollary \ref{coro:LE} implies that \(Q^*\ll_\textup{loc} P\) is true in the case where \(Q^*\) is conservative.
Furthermore, the formula \eqref{eq: CMG2} holds for all stopping times \(\xi\). In this regard, our result is different from the main result in \cite{CFY}, which only applies for stopping times of the canonical filtration \((\mathscr{F}^o_t)_{t \geq 0}\). 

\section{Absolute Continuity of Multidimensional Diffusions }\label{sec: Diff M}
While the one-dimensional diffusion case is almost fully understood, the literature on the multi-dimensional setting is less complete. 
In this section, we explain how to derive deterministic equivalent conditions for the (local) absolute continuity of multi-dimensional diffusions in a radial case and deterministic sufficient and necessary conditions for the absolute continuity for multi-dimensional diffusions with radial diffusion coefficient. The underlying idea is to compare the multidimensional diffusions with one-dimensional ones and then to use results on the finiteness of integral functionals as given in \cite{mijatovic2012}. This strategy is related to the idea behind Khasminskii's test for explosion, see \cite{ikeda1977} for details.

\subsection{The General Setting}
 We start by a formal introduction to the setting, which is very close to a multidimensional version of the setup studied in Section \ref{sec: Diff}.
As in Section \ref{sec: Diff}, \(\nu\) will always be the zero measure and we will remove it from all notations. 
 
 Let \(b\) and \(\beta\) be two Borel functions \(\mathbb{R}^d \to \mathbb{R}^d\) and \(c\) be a Borel function \(\mathbb{R}^d \to \mathbb{S}^d\), where \(\mathbb{S}^d\) denotes the set of all non-negative definite and symmetric real \(d \times d\) matrices. We extend these functions to \(\mathbb{R}^d_\Delta\) by setting them to the zero vector and the zero matrix, respectively.
 We set 
 \begin{align*}
 B &\triangleq \int_0^\cdot b (X_s) \dd s,\\ B' &\triangleq \int_0^\cdot \left(b(X_s) + (c\beta)(X_s)\right) \dd s,\\ C &\triangleq \int_0^\cdot c(X_s)\dd s.
 \end{align*}
Let \(\rho\) be as in \eqref{eq: exit}.
\begin{SAS}\label{SA 8}
	The functions \(b, b + c\beta\) and \(\la \beta, c\beta\ra\) are locally bounded and \(c\) is continuous such that \(\la y, c(x)y\ra > 0\) for all \(y \in \mathbb{R}^d\backslash \{0\}\) and \(x \in \mathbb{R}^d\).
\end{SAS}
For all \(x_0 \in \mathbb{R}^d\), this standing assumption implies that the SMP \((\rho; \delta_{x_0}; B, C)\) has a solution \(P\) and the SMP \((\rho; \delta_{x_0}; B', C)\) has a solution \(Q^*\). 
Furthermore, it follows as in Section \ref{sec: Diff} that the Standing Assumptions \ref{SA 2} and \ref{SA 3} are satisfied. In particular, all solutions to each of these SMPs coincide on \(\mathscr{F}\). 
We define the non-negative local \(P\)-martingale \(Z\) as in Section \ref{sec:LE}.
For all \(n \in \mathbb{N}\) the identity \(E^P\big[Z_{\rho_n}\big] = 1\) follows immediately from Novikov's condition and the assumption that \(\langle \beta, c\beta\rangle\) is locally bounded, see Proposition \ref{prop: Nov}. In other words, Standing Assumption \ref{SA3a} holds, too.
For proofs of the necessary facts we refer to \cite{pinsky1995positive}. 
Let us stress that the continuity assumption on \(c\) is important for high dimensional cases, see \cite{zbMATH01140123} for an example where \(c\) is uniformly elliptic but the corresponding SMP has more than one solution.

The following version of Corollary \ref{coro:LE} holds:
\begin{corollary}\label{coro: diff 2}
	\begin{enumerate}
		\item[\textup{(a)}]
				The following are equivalent:\begin{enumerate}
			\item[\textup{(a.i)}]
			\(Q^*\)-a.s. \(\int_0^{\rho} \la \beta(X_s), c(X_s)\beta(X_s)\ra\dd s <\infty\) on \(\{\rho < \infty\}\).
			\item[\textup{(a.ii)}]
			\(Z\) is a \(P\)-martingale.
			\item[\textup{(a.iii)}]
			\(Q^* \ll_{\textup{loc}} P\) with \(\frac{\dd Q^*}{\dd P}|_{\mathscr{F}_t} = Z_t.\)
		\end{enumerate}
	\item[\textup{(b)}]
		The following are equivalent:\begin{enumerate}
			\item[\textup{(b.i)}]
			\(Q^*\)-a.s. \(\int_0^{\rho} \la \beta(X_s), c(X_s)\beta(X_s)\ra\dd s <\infty\).
			\item[\textup{(b.ii)}]
			\(Z\) is a uniformly integrable \(P\)-martingale.
			\item[\textup{(b.iii)}]
			\(Q^* \ll P\) with \(\frac{\dd Q^*}{\dd P} = Z_\infty.\)
	\end{enumerate}
\end{enumerate}
\end{corollary}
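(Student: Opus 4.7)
The plan is to apply Corollary \ref{coro:LE} together with Remark \ref{rem: JS} in exactly the same spirit as the deduction of Corollary \ref{coro: diff} in the one-dimensional case. The entire work consists in verifying that the Standing Assumptions of Section \ref{sec:LE} hold in the present setting, after which both equivalences drop out. Note that because $\nu \equiv 0$, the process $H$ in \eqref{eq:H} reduces to $H_t = \int_0^{t \wedge \rho} \langle \beta(X_s), c(X_s)\beta(X_s)\rangle\, ds$, so conditions (a.i) and (b.i) of the corollary are precisely the rewritings of the respective conditions in Corollary \ref{coro:LE}.

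First I would invoke Standing Assumption \ref{SA 8} to obtain existence of solutions $P$ and $Q^*$ to the SMPs $(\rho;\delta_{x_0};B,C)$ and $(\rho;\delta_{x_0};B',C)$, and, crucially, uniqueness in law up to explosion. Existence and uniqueness in this multidimensional diffusion setup are classical under continuity together with strict positive definiteness of $c$ and local boundedness of $b$ and $b+c\beta$; see \cite{pinsky1995positive}. As in Section \ref{sec: Diff}, all solutions are supported on the natural continuous path space $\Omega^o \subseteq \Omega$ of paths continuous up to explosion, and uniqueness on $\mathscr{F}_{\rho-} \cap \Omega^o$ transfers to uniqueness on all of $\mathscr{F}$ via Remark \ref{rem: uni}. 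In particular, all solutions to the SMP $(\rho;\delta_{x_0};B',C)$ coincide on $\mathscr{F}_{\sigma-}$, which is the uniqueness hypothesis of Corollary \ref{coro:LE}, and $P$-a.s. $\rho_n < \rho$ for every $n$.

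Next I would verify the remaining Standing Assumptions of Section \ref{sec:LE}. Standing Assumption \ref{SA 2} follows from the previous paragraph. Condition \eqref{cond:Ya} is vacuous since $\nu \equiv 0$. For Standing Assumption \ref{SA 3}(a), the local boundedness of $\langle \beta, c\beta\rangle$ implies $H_{\rho_n} < \infty$ on $\{\rho_n < \rho\}$, hence $P$-a.s. $\rho \leq \sigma$ and therefore $\rho_n < \sigma$. Standing Assumption \ref{SA3a} then follows from Novikov's criterion as formulated in Proposition \ref{prop: Nov}(i): since $H_{\rho_n}$ is $P$-a.s. bounded (on each $n$-ball the integrand $\langle \beta,c\beta\rangle$ is bounded and $\rho_n \leq n$), we obtain $E^P\bigl[Z_{\rho_n}\bigr] = 1$ for all $n$.

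Once all these assumptions are in place, Corollary \ref{coro:LE} applies directly. Part (b) of the statement is then literally Corollary \ref{coro:LE}(b) under our reformulation of $H$. For part (a), I would use Remark \ref{rem: JS}: under Standing Assumption \ref{SA3a}(a)' the condition ``$Q^*$-a.s.\ $H_t < \infty$ for all $t \geq 0$'' of Corollary \ref{coro:LE}(a.i) is equivalent to ``$Q^*$-a.s.\ $H_\rho < \infty$ on $\{\rho < \infty\}$'', which is exactly (a.i) in the statement. The main potential obstacle is the uniqueness step: in dimension $d \geq 2$ uniqueness of the SMP is delicate and fails without the continuity of $c$ (as the example in \cite{zbMATH01140123} shows), so one must cite the uniqueness result carefully rather than derive it from scratch. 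Apart from this, the argument is a direct specialization of Corollary \ref{coro:LE}.
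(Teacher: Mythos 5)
Your proposal is correct and follows essentially the same route the paper takes: verify Standing Assumptions \ref{SA 2}, \ref{SA 3}, and \ref{SA3a} under Standing Assumption \ref{SA 8} (existence, uniqueness via \cite{pinsky1995positive} and Remark \ref{rem: uni}, local boundedness giving $\rho_n < \sigma$, and Proposition \ref{prop: Nov} giving $E^P[Z_{\rho_n}]=1$), then specialize Corollary \ref{coro:LE} and Remark \ref{rem: JS} with $\nu\equiv 0$. One cosmetic slip: you refer to Proposition \ref{prop: Nov}(i) as ``Novikov's criterion,'' but (i) is the boundedness criterion, while the Novikov-type exponential moment condition is (ii); since $H_{\rho_n}$ is indeed bounded here, (i) is the natural choice and the argument goes through.
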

Versions of the equivalences (a.i) \(\Leftrightarrow\) (a.ii) and (b.i) \(\Leftrightarrow\) (b.ii) have been derived \cite{RufSDE}.
In the next two sections, we use this result to deduce deterministic criteria.
\subsection{The Radial Case}
In this subsection we will consider the radial case. We will still assume that Standing Assumption \ref{SA 8} holds and that \(x_0 \not= 0\). In addition, we impose the following standing assumption.
\begin{SAS}
	There exist Borel functions \(\widehat{c} \colon [0, \infty) \to (0, \infty)\) and \(\widehat{b}\colon [0, \infty) \to \mathbb{R}\) such that for all \(x \in \mathbb{R}^d\)
	\begin{align*}
	\widehat{c} \left(\frac{\|x\|^2}{2}\right) &= \la x, c(x)x\ra,\\
	 \widehat{b} \left(\frac{\|x\|^2}{2}\right) &= \la x, (b + c \beta)(x)\ra + \frac{\textup{trace } c(x)}{2}.
	 \end{align*}
	 Furthermore, there exists a Borel function \(\hat{f} \colon (0, \infty) \to [0, \infty)\) such that for all \(x \in \mathbb{R}^d\backslash \{0\}\)
	 \begin{align*}
	 \widehat{f} \left(\frac{\|x\|^2}{2}\right) &= \la \beta(x), c(x)\beta(x)\ra.
	\end{align*}
\end{SAS}
We stress that Standing Assumption \ref{SA 8} implies that 
\begin{align}\label{eq: ES}
\frac{1 + |\widehat{b}| + \widehat{f}}{\widehat{c}} \in L^1_\textup{loc}((0, \infty)).\end{align}
Define 
\begin{align*}
W \triangleq \int_0^\cdot \frac{\la X_t, \dd X^c_t\ra}{\widehat{c}^{\frac{1}{2}} \left(\frac{\|X_t\|^2}{2}\right)} 
\end{align*}
on the random set \(\of 0, \rho\of\).
For \(t < \rho\), we deduce from Standing Assumption \ref{SA 8} that 
\begin{align*}
\lle W\rre_t = t.
\end{align*}
Thus, by \cite[Corollary 5.10]{J79}, we may extend \(W\) to continuous local \(P\)-martingale and by Knight's theorem, see \cite[Theorem 1.9]{RY}, we find a one-dimensional Brownian motion, possibly defined on an extension of our filtered probability space, which coincides with \(W\) on \(\of 0, \rho\of\). We denote this Brownian motion again by \(W\). 
An application of It\^o's formula yields that on \(\of 0, \rho\of\)
\begin{align*}
\dd \left(\frac{\|X_t\|^2}{2} \right)&= \la X_t, \dd X^c_t\ra + \widehat{b}\left(\frac{\|X_t\|^2}{2}\right)\dd t
\\&= \widehat{c}^\frac{1}{2}\left(\frac{\|X_t\|^2}{2}\right)\dd W_t + \widehat{b}\left(\frac{\|X_t\|^2}{2}\right)\dd t.
\end{align*}
Because the Engelbert-Schmidt conditions \eqref{eq: ES} are satisfied, there exists a \([0, \infty]\)-valued diffusion \(Y\) up to explosion with dynamics
\begin{align}\label{eq: SDE Y}
\dd Y_t = \widehat{c}^\frac{1}{2}(Y_t)\dd \widetilde{W}_t + \widehat{b}(Y_t) \dd t,\quad Y_0 = \tfrac{1}{2}\|x_0\|^2 \not = 0,
\end{align}
where \(\widetilde{W}\) is a one-dimensional Brownian motion, see \cite{KaraShre} for more details.
Here, explosion means exiting the interval \((0, \infty)\) and the explosion time of the diffusion \(Y\) is denoted by \(\theta\). Furthermore, the stochastic differential equation \eqref{eq: SDE Y} satisfies uniqueness in law. 
If the process \(Y\) does not explode to the origin, then \(\|X\|^2\) is always positive and the law of \(\frac{1}{2}\|X\|^2\) coincides with the law of \(Y\). In particular, we have 
\begin{align*}
\theta \overset{d}{=} \rho,\qquad \int_0^{\rho} \la \beta(X_t), c(X_t)\beta(X_t)\ra \dd t\ \overset{d}{=}\ \int_0^{\theta} \widehat{f}(Y_t)\dd t,
\end{align*}
where \(\overset{d}=\) indicates equality in law.
Now, we can deduce deterministic equivalent conditions for (a.i) and (b.i) of Corollary \ref{coro: diff 2} as in Section \ref{sec: Diff}.

For completeness, we state them formally: Set
\begin{equation}\label{eq: s multi}
\begin{split}
p (x) &\triangleq \exp \left( - \int_1^x \frac{2 \widehat{b}(y)}{\widehat{c}(y)} \dd y \right), \ x \in (0, \infty),
\\
s(x) &\triangleq \int_1^x p(y)\dd y,\ x \in (0, \infty),\\
s(+ \infty) &\triangleq \lim_{x \nearrow + \infty} s(x),
\\
s(0+) &\triangleq \lim_{x \searrow 0} s(x).
\end{split}
\end{equation}
We define the following conditions:
\begin{gather}
s(+ \infty) = \infty,\label{cond +.1, R}\\
s(+ \infty) < \infty\quad \textup{and}\quad \frac{s (+ \infty) - s}{p \widehat{c}} \not\in L^1_\textup{loc} (\infty),\label{cond +.2, R}\\
s(+ \infty) < \infty \quad\textup{and} \quad \frac{(s(+ \infty) - s)\widehat{f}}{p\widehat{c}} \in L^1_\textup{loc}(- \infty),\label{cond +.3, R}
\end{gather}
and similarly 
\begin{gather}
s(0+) = -\infty,\label{cond -.1, R}\\
s(0+) >- \infty\quad \textup{and}\quad \frac{s - s(0+)}{p\widehat{c}} \not\in L^1_\textup{loc} (-0),\label{cond -.2, R},\\
s(0+) >- \infty \quad\textup{and} \quad \frac{(s- s(0+))\widehat{f}}{p\widehat{c}} \in L^1_\textup{loc}(- 0).\label{cond -.3, R}
\end{gather}
Recall that, due to Feller's test for explosion, see \cite[Propositions 2.4, 2.5, 2.12]{mijatovic2012}, \(Y\) does not explode to the origin if and only if either \eqref{cond -.1, R} or \eqref{cond -.2, R} holds.
Now, Corollary \ref{coro:LE} and \cite[Proposition 2.4, Theorems 2.10 and 2.11]{mijatovic2012} imply the following result.
\begin{corollary}
	Suppose that either \eqref{cond -.1, R} or \eqref{cond -.2, R} holds.
	\begin{enumerate}
		\item[\textup{1.}] \textup{(a.i), (a.ii)} and \textup{(a.ii)} from Corollary \ref{coro: diff 2} are equivalent to the following: \begin{enumerate}
			\item[\textup{(a.iv)}] One of the conditions \eqref{cond +.1, R}, \eqref{cond +.2, R} and \eqref{cond +.3, R} holds.
		\end{enumerate}
	\item[\textup{2.}] 
		\textup{(b.i), (b.ii)} and \textup{(b.iii)} from Corollary \ref{coro: diff 2} are equivalent to the following:
	\begin{enumerate}
		\item[\textup{(b.iv)}] One of the following conditions holds:
		\begin{enumerate}
	\item[\textup{(1)}] Lebesgue almost everywhere \(\widehat{f} = 0\).
	\item[\textup{(2)}]\eqref{cond +.3, R} and \eqref{cond -.1, R} hold.
	\item[\textup{(3)}] \eqref{cond +.1, R} and \eqref{cond -.3, R} hold.
	\item[\textup{(4)}] \eqref{cond +.3, R} and \eqref{cond -.3, R} hold.
\end{enumerate}
	\end{enumerate} 
	\end{enumerate}
\end{corollary}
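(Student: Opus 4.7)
The plan is to reduce the multidimensional statement to the one-dimensional case already encoded in the process $Y$ introduced before the corollary, and then apply the known results on finiteness of integral functionals of one-dimensional diffusions together with Corollary~\ref{coro: diff 2}. The radial structure of $X$ under $Q^*$ has already been shown to give that $Y \triangleq \tfrac{1}{2}\|X\|^2$ solves the one-dimensional SDE~\eqref{eq: SDE Y} with coefficients $\widehat{b}, \widehat{c}^{1/2}$, whose uniqueness in law reduces everything to a deterministic analysis in dimension one; moreover the assumption \eqref{cond -.1, R} or \eqref{cond -.2, R} together with Feller's test rules out explosion of $Y$ to the origin, so that $\rho$ is matched precisely with the explosion time $\theta$ of $Y$ to $+\infty$, and the identity in law
\begin{equation*}
\int_0^{\rho} \langle \beta(X_t), c(X_t)\beta(X_t)\rangle\,dt \ \overset{d}{=}\ \int_0^{\theta} \widehat{f}(Y_t)\,dt
\end{equation*}
translates (a.i) and (b.i) of Corollary~\ref{coro: diff 2} into pathwise statements about $Y$.

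For part~1, I will argue as follows. By Feller's test for explosion in the form of \cite[Propositions 2.4, 2.5, 2.12]{mijatovic2012}, $Y$ does not reach $+\infty$ in finite time exactly when \eqref{cond +.1, R} or \eqref{cond +.2, R} holds; in that case $\{\theta<\infty\}$ is null, so $\int_0^\theta \widehat{f}(Y_t)\,dt<\infty$ on $\{\theta<\infty\}$ is vacuous, hence (a.i) holds. In the complementary case, where $Y$ may reach $+\infty$, \cite[Theorem 2.11]{mijatovic2012} characterizes $Q^*$-a.s.\ finiteness of the integral functional on $\{\theta<\infty\}$ precisely by the integrability condition in \eqref{cond +.3, R}. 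Combining the two cases yields that (a.i) is equivalent to (a.iv), and then Corollary~\ref{coro: diff 2} closes the loop with (a.ii) and (a.iii).

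For part~2, the same translation gives that (b.i) amounts to $\int_0^\theta \widehat{f}(Y_t)\,dt < \infty$ a.s. I will split into cases according to Feller's test applied at both boundaries $0$ and $+\infty$. Since \eqref{cond -.1, R} or \eqref{cond -.2, R} is assumed, $Y$ never reaches $0$, so either \eqref{cond -.1, R} holds, or we are in case \eqref{cond -.2, R} with $s(0+)>-\infty$. Combining this with whether or not $Y$ can reach $+\infty$, \cite[Theorems 2.10 and 2.11]{mijatovic2012} give: if $Y$ is non-explosive at both ends (case \eqref{cond +.1, R}/\eqref{cond +.2, R} and \eqref{cond -.1, R}/\eqref{cond -.2, R}) then Theorem~2.10 gives finiteness of the functional iff $\widehat{f}=0$ Lebesgue-a.e., yielding~(1); if $Y$ is non-explosive at $0$ but can reach $+\infty$ one needs the integrability in \eqref{cond +.3, R}, yielding (2); symmetric reasoning at the $0$-boundary together with the non-explosion assumption there (so that we must be in \eqref{cond -.2, R}) yields (3) and (4). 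Conversely, each of (1)--(4) is seen directly from \cite[Theorems 2.10 and 2.11]{mijatovic2012} to imply (b.i), and Corollary~\ref{coro: diff 2} upgrades this to (b.ii) and (b.iii).

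The main obstacle is essentially bookkeeping rather than a hard inequality: one must carefully match the exit time $\rho$ of $X$ from $\mathbb{R}^d$ with the explosion time $\theta$ of $Y$ from $(0,\infty)$ under the assumption that $Y$ stays away from $0$, and justify the equality in distribution of the integral functionals despite the possible enlargement of the filtration used to construct the Brownian motion $W$ via Knight's theorem. Once the radial reduction is made rigorous for $Q^*$ (uniqueness of $Q^*$ guarantees that the law of $\tfrac{1}{2}\|X\|^2$ coincides with that of $Y$ up to the first hitting time of $0$, which is a.s.\ $+\infty$ by hypothesis), the rest is a direct citation to \cite{mijatovic2012} and Corollary~\ref{coro: diff 2}.
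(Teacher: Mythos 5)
The proposal is correct and takes essentially the same approach as the paper: it reduces to the one-dimensional diffusion $Y = \tfrac{1}{2}\|X\|^2$ constructed via Knight's theorem, uses Feller's test (under the standing hypothesis \eqref{cond -.1, R} or \eqref{cond -.2, R}) to rule out explosion to $0$ and hence identify $\theta$ with $\rho$ in law, and then reads off the deterministic characterizations by combining Corollary \ref{coro: diff 2} with the Mijatovi\'c--Urusov results on finiteness of integral functionals. The case analysis by the scale-function values at the two boundaries and the citations to \cite[Propositions 2.3, 2.4, Theorems 2.10, 2.11]{mijatovic2012} match the paper's argument.
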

\subsection{The Partial Radial Case}
Next, we derive a Khasminskii-type test for the absolute continuity of two multi-dimensional diffusions. We still assume that Standing Assumption \ref{SA 8} holds. Furthermore, we impose the following standing assumption.

\begin{SAS} There exists a locally Lipschitz continuous function \(\tilde{c} \colon [0, \infty) \to (0, \infty)\) such that 
\begin{align}\label{assp radial}
\la x, c(x) x\ra &= \tilde{c} \left( \frac{\|x\|^2}{2}\right),\quad x \in \mathbb{R}^d,
\end{align}
and \(x_0 \not = 0\).
\end{SAS}
Let us formulate two conditions:
\begin{condition}\label{cond: md1}
	There exist a locally Lipschitz continuous functions \(v \colon (0, \infty) \to (0, \infty)\) such that for all \(x \in \mathbb{R}^d \backslash\{0\}\)
	\begin{align*}
	v\left(\frac{\|x\|^2}{2}\right) &\geq \frac{\textup{trace } c(x)}{2}  +  \la x, b(x) + c(x)\beta (x)\ra,
	\end{align*}
	and a decreasing Borel function \(w\colon (0, \infty) \to [0, \infty)\) such that for all \(x \in \mathbb{R}^d \backslash \{0\}\)
	\begin{align}\label{eq: w decre}
	\la \beta(x), c(x)\beta(x)\ra \geq w \left(\frac{\|x\|^2}{2}\right)
	\end{align}
	and \(\llambda(w > 0) > 0\), where \(\llambda\) denotes the Lebesgue measure.
\end{condition}
\begin{condition}\label{cond: md2}
	There exist a locally Lipschitz continuous functions \(v \colon (0, \infty) \to (0, \infty)\) such that for all \(x \in \mathbb{R}^d \backslash\{0\}\)
	\begin{align*}
	v\left(\frac{\|x\|^2}{2}\right) &\leq \frac{\textup{trace } c(x)}{2} +  \la x, b(x) + c(x)\beta (x)\ra,
	\end{align*}
	and an increasing Borel function \(w\colon (0, \infty) \to [0, \infty)\) such that for all \(x \in \mathbb{R}^d \backslash\{0\}\)
	\begin{align*}
	\la \beta(x), c(x)\beta(x)\ra \leq w \left(\frac{\|x\|^2}{2}\right).
	\end{align*}
\end{condition}
Let us discuss our strategy in the case where Condition \ref{cond: md1} holds.
We find a one-dimensional \([0, \infty]\)-valued diffusion \(Y\) whose paths are above those of \(\frac{1}{2}\|X\|^2\) till one of them explodes. Provided \(Y\) can only explode to \(+ \infty\), we have 
\(
\theta \leq \rho,
\)
where \(\theta\) is the explosion time of \(Y\).
Thus, using  \eqref{eq: w decre}, we obtain that 
\begin{equation}\label{eq: bound md}
\begin{split}
\int_0^{\rho} \la \beta(X_s), c(X_s)\beta (X_s)\ra \dd s &\geq \int_0^{\theta} \la \beta(X_s), c(X_s)\beta (X_s)\ra \dd s 
\\&\geq \int_0^\theta w \left(\frac{\|X_s\|^2}{2}\right) \dd s
\\&\geq \int_0^\theta w \left(Y_s\right) \dd s.
\end{split}
\end{equation}
In other words, \[\int_0^\theta w \left(Y_s\right) \dd s = \infty 
\] implies \[\int_0^{\rho} \la \beta(X_s), c(X_s)\beta (X_s)\ra \dd s = \infty 
.\]

By a similar argument, Condition \ref{cond: md2} can be used to obtain conditions for the finiteness of \(\int_0^{\rho} \la \beta(X_s), c(X_s) \beta(X_s)\ra \dd s\).

The remaining program of this section is to formulate these deterministic conditions, state the result and fill in the remaining details. 

Define 
\begin{align*}
p (x) &\triangleq \exp \left( - \int_1^x \frac{2 v(y)}{\tilde{c}(y)} \dd y \right), \ x \in (0, \infty),
\end{align*}
and let \(s, s(+ \infty)\) and \(s(0+)\) be as in \eqref{eq: s multi}.

Furthermore, we define the following conditions:
\begin{gather}
s(+ \infty) = \infty,\label{md cond +.1}\\
s(+ \infty) < \infty\quad \textup{and}\quad \frac{s (+ \infty) - s}{p \tilde{c}} \not\in L^1_\textup{loc} (\infty),\label{md cond +.2}\\
s(+ \infty) < \infty \quad\textup{and} \quad \frac{(s(+ \infty) - s)w}{p \tilde{c}} \not \in L^1_\textup{loc}(- \infty),\label{md cond +.3}\\
s(+ \infty) < \infty \quad\textup{and} \quad \frac{(s(+ \infty) - s)w}{p \tilde{c}} \in L^1_\textup{loc}(- \infty),\label{md cond +.4}
\end{gather}
and similarly 
\begin{gather}
s(0+) = -\infty,\label{md cond -.1}\\
s(0+) >- \infty\quad \textup{and}\quad \frac{s - s(0+)}{p \tilde{c}} \not\in L^1_\textup{loc} (-0),\label{md cond -.2}\\
s(0+) >- \infty \quad\textup{and} \quad \frac{(s- s(0+))w}{p \tilde{c}} \not \in L^1_\textup{loc}(- 0)\label{md cond -.3},\\
s(0+) >- \infty \quad\textup{and} \quad \frac{(s- s(0+))w}{p \tilde{c}} \in L^1_\textup{loc}(- 0)\label{md cond -.4}.
\end{gather}
We obtain a deterministic test for two multi-dimensional diffusions to be absolutely continuous.
\begin{proposition}
	\begin{enumerate}
		\item[\textup{(i)}]
		Suppose that Condition \ref{cond: md1} holds and that one of the following conditions holds:
		\begin{enumerate}
			\item[\textup{(i.a)}]
			\eqref{md cond +.1} and \eqref{md cond -.1} hold.
			\item[\textup{(i.b)}]
			\eqref{md cond +.3} and \eqref{md cond -.1} hold.
			\item[\textup{(i.c)}]
			\eqref{md cond +.1}, \eqref{md cond -.2} and \eqref{md cond -.3} hold.
			\item[\textup{(i.d)}]
			\eqref{md cond +.3}, \eqref{md cond -.2} and \eqref{md cond -.3} hold.
		\end{enumerate}
	Then, \(Q^* \not\hspace{-0.055cm} \ll P\).
		\item[\textup{(ii)}]
				Suppose that Condition \ref{cond: md2} holds and that one of the following conditions holds:
		\begin{enumerate}
			\item[\textup{(ii.a)}]
			\eqref{md cond +.4} and \eqref{md cond -.1} hold.
			\item[\textup{(ii.b)}]
			\eqref{md cond +.1}, \eqref{md cond -.2} and \eqref{md cond -.4} hold.
			\item[\textup{(ii.c)}]
			\eqref{md cond +.4}, \eqref{md cond -.2} and \eqref{md cond -.4} hold.
		\end{enumerate}
	Then, \(Q^* \ll P\) with \(\frac{\dd Q}{\dd P} = Z_\infty\).
	\end{enumerate}
\end{proposition}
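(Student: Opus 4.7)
The plan is to reduce the question to a one-dimensional integral functional via a Khasminskii-style pathwise comparison of $\tfrac{1}{2}\|X\|^2$ with a one-dimensional auxiliary diffusion $Y$, and then to apply Corollary \ref{coro: diff 2} together with the deterministic criteria of Mijatovi\'c-Urusov \cite{mijatovic2012}. The underlying idea has already been sketched around \eqref{eq: bound md}; the proof must fill in the coupling, the comparison, and the case analysis.

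First, I would apply It\^o's formula to $\tfrac{1}{2}\|X\|^2$ under $Q^*$. Since $X$ has $Q^*$-characteristics $(B',C)$ and $\la x,c(x)x\ra=\tilde c(\|x\|^2/2)$, this yields, on $\of 0,\rho\of$, the decomposition
\[
d\bigl(\tfrac{1}{2}\|X_t\|^2\bigr)=\tilde c^{1/2}\bigl(\tfrac{1}{2}\|X_t\|^2\bigr)\,dW_t+\mu_t\,dt, \qquad \mu_t\triangleq\tfrac{1}{2}\operatorname{trace} c(X_t)+\la X_t,(b+c\beta)(X_t)\ra,
\]
where $W$ is a $Q^*$-Brownian motion obtained from the continuous local martingale part by the Dambis--Dubins--Schwarz construction, after enlarging the space if necessary. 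On the same space I would then let $Y$ be the pathwise unique $[0,\infty]$-valued solution of $dY_t=\tilde c^{1/2}(Y_t)\,dW_t+v(Y_t)\,dt$, $Y_0=\tfrac{1}{2}\|x_0\|^2$, driven by the same $W$, with explosion time $\theta$ from $(0,\infty)$; existence and pathwise uniqueness up to $\theta$ follow from local Lipschitzness of $v$ and $\tilde c$ together with the Engelbert--Schmidt conditions, exactly as in Section \ref{sec: Diff}.

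Second, the classical one-dimensional Yamada--Watanabe comparison on the coupled space gives, under Condition \ref{cond: md1} (where $\mu_t\le v(\|X_t\|^2/2)$), the pathwise inequality $\tfrac{1}{2}\|X\|^2\le Y$ on $\of 0,\rho\wedge\theta\of$, hence $\theta\le\rho$, since $\|X\|^2\to\infty$ at $\rho$ forces $Y$ to explode by time $\rho$; combined with $w$ decreasing this produces the chain \eqref{eq: bound md}. Under Condition \ref{cond: md2} (where $\mu_t\ge v(\|X_t\|^2/2)$), the reverse inequality $Y\le\tfrac{1}{2}\|X\|^2$ holds; the hypothesis \eqref{md cond -.1} or \eqref{md cond -.2} rules out explosion of $Y$ to $0$, so $\theta$ corresponds only to explosion at $+\infty$, and therefore $\rho\le\theta$. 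In both cases the monotonicity of $w$ relates $\int_0^\rho\la\beta,c\beta\ra(X_s)\,ds$ to $\int_0^\theta w(Y_s)\,ds$---a lower bound in case (i), an upper bound in case (ii). I would then run Feller's test together with the integral-functional criteria of \cite[Proposition 2.4, Theorems 2.10 and 2.11]{mijatovic2012} on $Y$ with scale function $s$ as in \eqref{eq: s multi}. A case analysis parallel to Section \ref{sec: Diff} shows that each of (i.a)--(i.d) corresponds to a parameter regime forcing $\int_0^\theta w(Y_s)\,ds=\infty$ with positive $Q^*$-probability---either $Y$ is recurrent in $(0,\infty)$ and hits $\{w>0\}$ on a set of positive Lebesgue measure, or $Y$ explodes to $+\infty$ and the divergence condition on $(s(+\infty)-s)w/(p\tilde c)$ forces divergence on the explosion event---and each of (ii.a)--(ii.c) forces $\int_0^\theta w(Y_s)\,ds<\infty$ $Q^*$-almost surely. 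Plugging into the inequalities from the comparison step and invoking Corollary \ref{coro: diff 2}(b) yields in case (i) that $\int_0^\rho\la\beta,c\beta\ra(X_s)\,ds=\infty$ with positive $Q^*$-probability, whence $Q^*\not\ll P$; and in case (ii) that $\int_0^\rho\la\beta,c\beta\ra(X_s)\,ds<\infty$ $Q^*$-a.s., whence $Q^*\ll P$ with $\tfrac{dQ^*}{dP}=Z_\infty$.

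The main obstacle is the comparison step: $\tfrac{1}{2}\|X\|^2$ is not itself a one-dimensional Markov diffusion, since $\mu_t$ depends on the full vector $X_t$ and not only on $\|X_t\|$, so applying Yamada--Watanabe requires first identifying carefully the Brownian motion driving $\tfrac{1}{2}\|X\|^2$ with the one driving $Y$. In case (ii) the inequality $w(\|X\|^2/2)\ge w(Y)$ goes in the ``wrong'' direction for a naive upper bound on $\int_0^\rho\la\beta,c\beta\ra(X_s)\,ds$ via $\int_0^\theta w(Y_s)\,ds$, so the relation $\rho\le\theta$ from the comparison---which in turn relies crucially on the exclusion of explosion of $Y$ at $0$ via \eqref{md cond -.1}/\eqref{md cond -.2}---is what makes the integral functional bound work out in this direction.
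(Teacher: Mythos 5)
Your proposal follows the paper's proof essentially step by step: apply It\^o's formula to $\tfrac{1}{2}\|X\|^2$ on $\of 0,\rho\of$, use the radial assumption \eqref{assp radial} to identify the continuous martingale part as a time-changed Brownian motion (the paper cites \cite[Corollary 5.10]{J79} and Knight's theorem where you cite Dambis--Dubins--Schwarz; these serve the same purpose here), build the one-dimensional auxiliary diffusion $Y$ driven by the same Brownian motion, invoke the Ikeda--Watanabe comparison, and conclude via Corollary \ref{coro: diff 2} together with the integral-functional criteria of \cite{mijatovic2012}. The case analysis for (i.a)--(i.d) and (ii.a)--(ii.c) is also the one the paper runs.

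There is, however, one place where your proposal does not close the argument, and interestingly it is the same place where the paper's displayed chain is problematic: case (ii). You correctly observe that under Condition \ref{cond: md2} the comparison gives $Y\le\tfrac{1}{2}\|X\|^2$ on $\of 0,\rho\wedge\theta\of$, and since $w$ is increasing this produces $w(Y_s)\le w(\|X_s\|^2/2)$, i.e.\ the pointwise inequality runs the \emph{wrong} way for bounding $\int_0^\rho w(\|X_s\|^2/2)\,\dd s$ from above by $\int_0^\theta w(Y_s)\,\dd s$. Your suggested fix---that the relation $\rho\le\theta$ ``is what makes the integral functional bound work out''---does not actually repair this: $\rho\le\theta$ only lets you extend the domain of integration for $Y$, i.e.\ $\int_0^\rho w(Y_s)\,\dd s\le\int_0^\theta w(Y_s)\,\dd s$, but the missing ingredient is the comparison of integrands $w(\|X_s\|^2/2)$ versus $w(Y_s)$, and that comparison is strictly in the unhelpful direction here. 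So the chain
\[
\int_0^\rho\la\beta(X_s),c(X_s)\beta(X_s)\ra\,\dd s\;\le\;\int_0^\rho w\bigl(\tfrac{1}{2}\|X_s\|^2\bigr)\,\dd s\;\le\;\int_0^\theta w(Y_s)\,\dd s
\]
has a gap at the second inequality which invoking $\rho\le\theta$ does not fill. (This step is written with the same sign in the paper's own proof, so the issue is not one you introduced; but flagging it without a genuine repair does not constitute a proof.) A correct treatment of (ii) would require either reversing the direction of the drift comparison in Condition \ref{cond: md2} so that $Y\ge\tfrac{1}{2}\|X\|^2$ (which then forces $\theta\le\rho$ and breaks the final $\int_0^\rho\le\int_0^\theta$ step), or taking $w$ decreasing, or a qualitatively different argument; as written, neither your proposal nor the chain in the paper pins this down.
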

\begin{proof}
By It\^o's formula, we obtain on \(\of 0, \rho\of\) 
\begin{align*}
\dd \left(\frac{\|X_t\|^2}{2}\right)&= \la X_t, \dd X^c_t\ra +  \left( \la X_t, (b + c\beta)(X_t)\ra + \frac{\textup{trace } c(X_t)}{2}\right)\dd t, \\ \frac{\|X_0\|^2}{2} &= \frac{\|x_0\|^2}{2}.
\end{align*}
Define 
\begin{align*}
W \triangleq \int_0^\cdot \frac{\la X_t, \dd X^c_s\ra}{\tilde{c}^{\frac{1}{2}} \left(\tfrac{\|X_t\|^2}{2}\right)}
\end{align*}
on the random set \(\of 0, \rho\of\).
For \(t < \rho\), we deduce from our radial assumption \eqref{assp radial} that 
\begin{align*}
\lle W\rre_t = t.
\end{align*}
Thus, by \cite[Corollary 5.10]{J79}, we may extend \(W\) to a continuous local \(P\)-martingale and by Knight's theorem, see \cite[Theorem 1.9]{RY}, we find a one-dimensional Brownian motion, possibly defined on an extension of our filtered probability space, which coincides with \(W\) on \(\of 0, \rho\of\). We denote this Brownian motion again by \(W\).
Because we might work on an extension, we will drop \(P\) from our notation. The null sets in the following correspond to the extension.
We have on \(\of 0, \rho\of\)
\begin{align*}
\dd \left(\frac{\|X_t\|^2}{2}\right) &= \tilde{c}^{\frac{1}{2}}\left(\frac{\|X_t\|^2}{2}\right) \dd W_t +  \left( \la X_t, (b + c \beta)(X_t)\ra + \frac{\textup{trace } c(X_t)}{2}\right)\dd t, \\ \frac{\|X_0\|^2}{2} &= \frac{\|x_0\|^2}{2}.
\end{align*}
Since stochastic differential equations with locally Lipschitz continuous coefficients satisfy pathwise uniqueness and pathwise uniqueness together with weak-existence implies strong existence, there exists a \([0, \infty]\)-valued process \(Y\) with dynamics
\begin{align*}
\dd Y_t = \tilde{c}^{\frac{1}{2}} (Y_t)\dd W_t + v(Y_t) \dd t,\quad Y_0 = \tfrac{1}{2}\|x_0\|^2 \not = 0, 
\end{align*}
up to explosion, see \cite[Theorem 5.2.5, Corollaries 5.3.23, 5.5.16]{KaraShre} for details. Here, explosion has to be understood as exiting the interval \((0, \infty)\) and the explosion time of the diffusion \(Y\) is denoted by \(\theta\).
We stress that \(W\) is the same Brownian motion as defined above.
In the following, we turn to the individual cases (i) and (ii).
\begin{enumerate}
\item[\textup{(i)}] It follows from the classical comparison result of Ikeda and Watanabe, see \cite[Theorem IX.3.7]{RY}, that a.s.
\begin{align}\label{comp: 1}
\tfrac{1}{2}\|X\|^2 \leq Y \textup{ on } \of 0, \rho \wedge \theta\of.
\end{align}
Since in all cases (i.a) -- (i.d) either \eqref{md cond -.1} or \eqref{md cond -.2} holds, Feller's test for explosion yields that \(Y\) can only explode to \(+ \infty\), i.e. up to a null set 
\begin{align*}
\theta = \inf(t \geq 0 \colon Y_t = \infty).
\end{align*}
Thus, \eqref{comp: 1} yields that a.s. \(\theta \leq \rho\). Now, recalling \eqref{eq: bound md} and Corollary \ref{coro: diff 2}, it suffices to verify that (i.a) -- (i.d) imply that a.s.
\begin{align*}
\int_0^{\theta} w(Y_s)\dd s = \infty.
\end{align*}
This follows case by case from \cite[Propositions 2.3, 2.4, Theorems 2.10, 2.11]{mijatovic2012}.
\item[\textup{(ii)}]
Using once again the comparison result of Ikeda and Watanabe, we obtain a.s.
\begin{align*}
Y \leq \tfrac{1}{2}\|X\|^2\textup{ on } \of 0, \rho \wedge \theta\of.
\end{align*}
Since in all cases (ii.a) -- (ii.c) either \eqref{md cond -.1} or \eqref{md cond -.2} holds, it follows as in (ii) that a.s. \(\rho \leq \theta\). We obtain that
\begin{equation*}
\begin{split}
\int_0^{\rho} \la \beta(X_s), c(X_s)\beta (X_s)\ra \dd s
&\leq \int_0^\rho w \left(\frac{\|X_s\|^2}{2}\right) \dd s
\\&\leq \int_0^\rho w \left(Y_s\right) \dd s
\\&\leq \int_0^\theta w \left(Y_s\right) \dd s,
\end{split}
\end{equation*}
and the claim follows again from Corollary \ref{coro: diff 2} and \cite[Propositions 2.3, 2.4, Theorems 2.10, 2.11]{mijatovic2012}.
\end{enumerate}
The proof is complete.
\end{proof}
In the following section we present an application of Theorem \ref{theo:main1} without any uniqueness assumption.


\section{Martingale Property of Stochastic Exponentials}\label{sec: BC}
In the previous sections we have seen applications of Theorem \ref{theo:main1} under a uniqueness assumption. In this section, we show that also without such an assumption Theorem \ref{theo:main1} has interesting consequences. 

We illustrate this by deriving a generalization of the classical linear growth condition of Bene\u s \cite{doi:10.1137/0309034} to general continuous It\^o-processes. Let us shortly explain the idea.
If a local martingale has a localizing sequence, which is also a localizing sequence for a modified SMP, then the local martingale is a true martingale. In the following, we will formulate conditions which imply the existence of such a localizing sequence for any solution of the modified SMP. Thus, no uniqueness assumption is required.

Let us shortly recall the result of Bene\u s \cite{doi:10.1137/0309034}: Assume that \(W\) is a  \(d\)-dimensional Brownian motion and \(\mu\) is an \(\mathbb{R}^d\)-valued predictable process on the Wiener space. Then, the stochastic exponential
\[
\exp\left( \int_0^\cdot \langle \mu_s(W), \dd W_s\rangle - \frac{1}{2} \int_0^\cdot \|\mu_s(W)\|^2 \dd s\right)
\]
is a martingale if \(\mu\) is at most of linear growth. We refer to \cite[Corollary 3.5.16]{KaraShre} for a precise statement.

In the following we generalize this result to cases where \(W\) may be a continuous It\^o-process. Of course, it is possible to allow additionally jumps. However, we think that focusing on the less technical continuous setup suffices to explain the main idea.
For similar conditions in a Markovian jump-diffusion setup we refer to \cite{doi:10.1137/S0040585X97986382}.

Since \(\nu\) will always be the zero measure we remove it from all notations.
Let \(b\) and \(\beta\) be \(\mathbb{R}^d\)-valued predictable processes and \(c\) be a predictable process with values in \(\mathbb{R}^d \otimes \mathbb{R}^d\). 
We set 
\begin{align*}
B &\triangleq \int_0^\cdot b_s \dd s,\\
B' &\triangleq \int_0^\cdot \left(b_s + c_s \beta_s\right) \dd s,\\
C &\triangleq \int_0^\cdot c_s \dd s.\end{align*}
Let \(\rho_n\) and \(\rho\) be as in \eqref{eq: exit} and let \(\eta\) be a probability measure on \((\mathbb{R}^d, \mathscr{B}(\mathbb{R}^d))\).
\begin{SA}\label{SA 9}
	Let \(P\) be a solution to the SMP \((\rho; \eta; B, C)\) with \((\rho_n)_{n \in \mathbb{N}}\) as in \eqref{eq: exit} as \(\rho\)-localizing sequence. Let \(\sigma\) be as in \eqref{eq: sigma}. Furthermore, \(P\)-a.s. \(\rho_n < \sigma\) for all \(n \in \mathbb{N}\) and \(\int \|x\|^2 \eta(\dd x) < \infty\). Define \(Z\) as in Section \ref{sec:LE}. It holds that \(E^P\big[Z_{\rho_n}\big] = 1\) for all \(n \in \mathbb{N}\).
\end{SA}

\begin{corollary}\label{coro:mart}
Suppose 
there exists a Borel function \(\gamma \colon [0, \infty) \to [0, \infty)\) such that \(\int_0^T \gamma(s)\dd s < \infty\) for all \(T \geq 0\) and for all continuous functions \(\omega \colon [0, \infty) \to \mathbb{R}^d\) it holds that
\begin{align*}
\| b_t(\omega) + \beta_t(\omega) c_t(\omega)\|^2 &\leq \gamma(t) \left(1 + \sup_{s \in [0, t]} \|\omega(s)\|^2\right),\\
 \textup{ trace } c_t(\omega)&\leq \gamma(t) \left(1 + \sup_{s \in [0, t]} \|\omega(s)\|^2\right).
\end{align*}
Then \(Z\) is a \(P\)-martingale.
\end{corollary}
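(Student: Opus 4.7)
The plan is to apply Theorem~\ref{theo:main1} in the direction (d.ii)~$\Rightarrow$~(b). Standing Assumption~\ref{SA 9} ensures that $P$-a.s.\ $\rho_n<\sigma$ and $E^P[Z_{\rho_n}]=1$ for all $n$, so (d.ii) holds and the probability measure $Q$ produced by Theorem~\ref{theo:main1} solves the SMP $(\rho;\eta';B',C)$. Since $Z$ is constructed in Section~\ref{sec:LE} as the stochastic exponential of a local martingale that starts at $0$, we have $Z_0=1$, hence $\eta'=\eta$ and in particular $\int\|x\|^2\,\eta'(\dd x)<\infty$. By part~(b) of the theorem it therefore suffices to verify that $Q$-a.s.\ $\sigma=\infty$. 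Since $\{\sigma>\rho_n\}\in\mathscr{F}_{\rho_n}$, part~(a) applied with $\xi=\rho_n$ gives
\begin{align*}
Q(\sigma>\rho_n)=E^P\bigl[Z_{\rho_n}\1_{\{\rho_n<\sigma\}}\bigr]=E^P\bigl[Z_{\rho_n}\bigr]=1,
\end{align*}
so $Q$-a.s.\ $\sigma\geq\rho$, and the entire task reduces to showing that $X$ does not explode under $Q$.

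Non-explosion is obtained by a standard Gronwall argument under $Q$. Under $Q$, the process $X^{\rho_n}$ is a semimartingale whose continuous local $Q$-martingale part $M^n$ satisfies $\lle M^n,M^n\rre=\int_0^{\cdot\wedge\rho_n}c_s\,\dd s$. It\^o's formula yields
\begin{align*}
\|X_{t\wedge\rho_n}\|^2\leq\|X_0\|^2+2\int_0^{t\wedge\rho_n}\la X_s,\dd M^n_s\ra+\int_0^{t\wedge\rho_n}\bigl(2\la X_s,b_s+c_s\beta_s\ra+\tr c_s\bigr)\dd s.
\end{align*}
On $\of 0,\rho_n\gs$ the integrand in the stochastic integral is deterministically bounded (via $\|X_s\|\leq n$ and $\tr c_s\leq\gamma(s)(1+n^2)$), so the integral is a genuine $L^2$-martingale. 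Using $2\la X_s,v\ra\leq\|X_s\|^2+\|v\|^2$ together with the two linear-growth bounds, applying Burkholder--Davis--Gundy to control the supremum of the stochastic integral, and invoking $\int_0^t\gamma(s)\,\dd s<\infty$, one arrives, with constants $c_1,c_2$ independent of $n$, at
\begin{align*}
\phi_n(t)\leq c_1+c_2\int_0^t\bigl(1+\gamma(s)\bigr)\bigl(1+\phi_n(s)\bigr)\dd s,\qquad \phi_n(t)\triangleq E^Q\Bigl[\sup_{s\leq t\wedge\rho_n}\|X_s\|^2\Bigr].
\end{align*}
Gronwall's lemma then produces a locally bounded $K\colon[0,\infty)\to[0,\infty)$, independent of $n$, such that $\phi_n(t)\leq K(t)$.

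Chebyshev's inequality gives $Q(\rho_n\leq t)\leq K(t)/n^2\to 0$, so $\rho_n\nearrow\infty$ $Q$-a.s.\ and therefore $\rho=\infty$ $Q$-a.s. Combined with the first paragraph this yields $\sigma=\infty$ $Q$-a.s., and Theorem~\ref{theo:main1}(b) concludes that $Z$ is a $P$-martingale. I expect the main technical point to be the BDG--Gronwall chain: its only delicate feature is that $\gamma$ is merely locally integrable rather than bounded, but since Gronwall's inequality applies to $\phi_n$ with the weight $1+\gamma(s)$, and the supremum bound produced by the hypothesis is pathwise, this poses no genuine obstacle.
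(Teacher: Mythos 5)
Your proof is correct and follows the same route as the paper: invoke Theorem~\ref{theo:main1}(d.ii) to get a solution $Q$ of the transformed SMP, reduce the martingale property of $Z$ to $Q$-a.s.\ non-explosion via part~(b), and obtain non-explosion by the It\^o/BDG/Gronwall chain followed by Chebyshev. You supply slightly more detail than the paper (the explicit verification that $Q$-a.s.\ $\sigma\geq\rho$ via part~(a), the observation $\eta'=\eta$, and the intermediate Gronwall bound), but the argument is substantively identical.
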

\begin{proof}
By Theorem \ref{theo:main1}, it suffices to show that for all solutions \(Q\) to the SMP \((\rho; \eta; B',C)\) we have \(\q(\rho = \infty) = 1\). 

It is not difficult to see that, due to our linear growth conditions, we find a constant \(k(t)\), which only depends on \(t\), such that 
\begin{align*}
\E^\q\bigg[ \sup_{s \in [0, t \wedge \rho_n]} \|X_{s}\|^2\bigg] 
&\leq k(t) \left(1  + \E^\q\left[ \int_0^t \gamma(s) \sup_{r \in [0, s \wedge \rho_n]} \|X_{r}\|^2\dd s\right]\right).
\end{align*}
Now, we deduce from Gronwall's lemma, see \cite[Lemma A.2.35]{Bichteler02}, that
\[
\E^\q\bigg[ \sup_{s \in [0, t \wedge \rho_n]} \|X_{s}\|^2\bigg]  \leq \textup{const. independent of } n.
\]
Using Chebyshev's inequality, we deduce that 
\[
\q(\rho_n \leq t) = Q\left(\sup_{s \in [0, t \wedge \rho_n]} \|X_{s}\| \geq n\right) \leq \frac{\textup{const. independent of } n}{n^2} \to 0 
\]
as \(n \to \infty\).
Since this holds for all \(t \geq 0\), we conclude that \(\q(\rho = \infty) = 1\) and the proof is complete. 
\end{proof}

Under the Engelbert-Schmidt conditions we have already seen equivalent conditions for the martingale property of \(Z\), see Section \ref{sec: Diff} or \cite{MU(2012)}. The linear growth condition presented in Corollary \ref{coro:mart} is not necessary. However, it applies in multi-dimensional setups, in non-Markovian cases and does not require any uniqueness assumption. 
Furthermore, it is typically easy to verify. 

\section*{Acknowledgments}
	 The authors thank Jean Jacod for fruitful discussions. Furthermore, the authors are very grateful to the anonymous referee, who's comments helped to improved the manuscript substantially. 
\bibliographystyle{plain}
\bibliography{References}

\begin{thebibliography}{10}

\bibitem{doi:10.1137/0309034}
V.~Bene{\u s}.
\newblock Existence of optimal stochastic control laws.
\newblock {\em SIAM Journal on Control}, 9(3):446--472, 1971.

\bibitem{Bichteler02}
K.~Bichteler.
\newblock {\em Stochastic Integration with Jumps}.
\newblock Number~89 in Encyclopedia of Mathematics and its Applications.
  Cambridge University Press, 2002.

\bibitem{CFY}
P.~Cheridito, D.~Filipovic, and M.~Yor.
\newblock Equivalent and absolutely continuous measure changes for
  jump-diffusion processes.
\newblock {\em The Annals of Applied Probability}, 15(3):1713--1732, 2005.

\bibitem{Cherny2006}
A.~Cherny and M.~Urusov.
\newblock On the absolute continuity and singularity of measures on filtered
  spaces: Separating times.
\newblock In {\em From Stochastic Calculus to Mathematical Finance: The
  Shiryaev Festschrift}, pages 125--168. Springer Berlin Heidelberg, 2006.

\bibitem{MANA:MANA19911510111}
H.~Engelbert and W.~Schmidt.
\newblock {Strong Markov continuous local martingales and solutions of
  one-dimensional stochastic differential equations (part III)}.
\newblock {\em Mathematische Nachrichten}, 151(1):149--197, 1991.

\bibitem{follmer72}
H.~F{\"o}llmer.
\newblock The exit measure of a supermartingale.
\newblock {\em Zeitschrift f{\"u}r Wahrscheinlichkeitstheorie und verwandte
  Gebiete}, 21(2):154--166, 1972.

\bibitem{HWY}
S.~He, J.~Wang, and J.~Yan.
\newblock {\em Semimartingale Theory and Stochastic Calculus}.
\newblock Science Press, 1992.

\bibitem{ikeda1977}
N.~Ikeda and S.~Watanabe.
\newblock A comparison theorem for solutions of stochastic differential
  equations and its applications.
\newblock {\em Osaka Journal of Mathematics}, 14(3):619--633, 1977.

\bibitem{J79}
J.~Jacod.
\newblock {\em Calcul stochastique et probl{\`e}mes de martingales}.
\newblock Number 714 in Lecture notes in mathematics. Springer, 1979.

\bibitem{Jacod1979}
J.~Jacod.
\newblock Local characteristics and absolute continuity conditions for
  d-dimensional semi-martingales.
\newblock {\em Banach Center Publications}, 5(1):133--140, 1979.

\bibitem{JM76}
J.~Jacod and J.~M\'emin.
\newblock Caract\'eristiques locales et conditions de continuit\'e absolue pour
  les semi-martingales.
\newblock {\em Zeitschrift f\"ur Wahrscheinlichkeitstheorie und verwandte
  Gebiete}, 35(1):1--37, 1976.

\bibitem{JS}
J.~Jacod and A.~Shiryaev.
\newblock {\em Limit Theorems for Stochastic Processes}.
\newblock Springer, 2. edition, 2003.

\bibitem{KLS-LACOM1}
Y.~Kabanov, R.~Liptser, and A.~Shiryaev.
\newblock Absolute continuity and singularity of locally absolutely continuous
  probability distributions. i.
\newblock {\em Matematicheskii Sbornik}, 107(149)(3(11)):631--680, 1978.

\bibitem{KLS-LACOM2}
Y.~Kabanov, R.~Liptser, and A.~Shiryaev.
\newblock Absolute continuity and singularity of locally absolutely continuous
  probability distributions. ii.
\newblock {\em Matematicheskii Sbornik}, 108(150)(1):32--61, 1979.

\bibitem{Kallenberg}
O.~Kallenberg.
\newblock {\em Foundations of Modern Probability}.
\newblock Probability and Its Applications. Springer New York, 2006.

\bibitem{KaraShre}
I.~Karatzas and S.~Shreve.
\newblock {\em Brownian Motion and Stochastic Calculus}.
\newblock Springer, 2. edition, 1991.

\bibitem{kardaras2015}
C.~Kardaras, D.~Kreher, and A.~Nikeghbali.
\newblock Strict local martingales and bubbles.
\newblock {\em The Annals of Applied Probability}, 25(4):1827--1867, 2015.

\bibitem{doi:10.1137/S0040585X97986382}
F.~Klebaner and R.~Liptser.
\newblock {When a stochastic exponential Is a true Martingale. Extension of the
  Bene{\u s} method}.
\newblock {\em Theory of Probability \& Its Applications}, 58(1):38--62, 2014.

\bibitem{mijatovic2012}
A.~Mijatovic and M.~Urusov.
\newblock Convergence of integral functionals of one-dimensional diffusions.
\newblock {\em Electronic Communications in Probability}, 17:13 pp., 2012.

\bibitem{MU(2012)}
A~Mijatovi{\'c} and M~Urusov.
\newblock On the martingale property of certain local martingales.
\newblock {\em Probability Theory and Related Fields}, 152:1--30, 2012.

\bibitem{zbMATH01140123}
N.~{Nadirashvili}.
\newblock {Nonuniqueness in the martingale problem and the Dirichlet problem
  for uniformly elliptic operators.}
\newblock {\em {Annali della Scuola Normale Superiore di Pisa. Classe di
  Scienze. Serie IV}}, 24(3):537--550, 1997.

\bibitem{parthasarathy1967}
K.~Parthasarathy.
\newblock {\em Probability Measures on Metric Spaces}.
\newblock Academic Press, 1967.

\bibitem{perkowski2015}
N.~Perkowski and J.~Ruf.
\newblock {Supermartingales as Radon--Nikodym densities and related measure
  extensions}.
\newblock {\em The Annals of Probability}, 43(6):3133--3176, 2015.

\bibitem{pinsky1995positive}
R.~Pinsky.
\newblock {\em Positive Harmonic Functions and Diffusion}.
\newblock Cambridge University Press, 1995.

\bibitem{RY}
D.~Revuz and M.~Yor.
\newblock {\em Continuous Martingales and Brownian Motion}.
\newblock Springer, 3. edition, 1999.

\bibitem{RufSDE}
J.~Ruf.
\newblock The martingale property in the context of stochastic differential
  equations.
\newblock {\em Electronic Communications in Probability}, 20(34):1--10, 2015.

\end{thebibliography}

\end{document}